\documentclass{amsart}[1996/10/24]

\usepackage{graphicx}

\newtheorem{theorem}{Theorem}[section]

\theoremstyle{definition}
\newtheorem{definition}[theorem]{Definition}

\theoremstyle{plain}
\newtheorem{assumption}[theorem]{Assumption}

\theoremstyle{remark}
\newtheorem{remark}[theorem]{Remark}

\numberwithin{equation}{section}

\usepackage[]{hyperref}

\usepackage[oldcommands,linesnumbered,ruled,norelsize]{algorithm2e}

\usepackage{layouts}
\usepackage[table]{xcolor}
\definecolor{lightgray}{gray}{0.9}

\usepackage{enumerate}
\usepackage{pdflscape}

\usepackage{caption}
\usepackage{subcaption}

\newcommand{\dd}{\textrm{d}}
\newcommand{\y}{\textbf{y}}
\newcommand{\x}{\textbf{x}}
\newcommand{\maxSV}{\tilde{M}}

\newcommand{\R}{\mathbb{R}}
\newcommand{\N}{\mathbb{N}}
\newcommand{\Z}{\mathbb{Z}}

\newcommand{\supp}[1]{\mathrm{supp}(#1)} 
\newcommand{\length}[1]{\mathrm{length}(#1)} 

\DeclareMathOperator{\TS}{\mathtt{TS}}
\DeclareMathOperator{\TD}{\mathtt{TD}}
\DeclareMathOperator{\TP}{\mathtt{TP}}
\DeclareMathOperator{\aTD}{\mathtt{aTD}}
\DeclareMathOperator{\aTP}{\mathtt{aTP}}

\DeclareMathOperator{\isoTD}{\mathtt{isoTD}}
\DeclareMathOperator{\isoTP}{\mathtt{isoTP}}
\DeclareMathOperator{\TDFC}{\mathtt{TD-FC}}
\DeclareMathOperator{\bestM}{\mathtt{bestM}}

\DeclareMathOperator{\PtC}{\mathtt{PtC}}

\newcommand{\wigner}[3]{\begin{pmatrix} #1 & #2 & #3 \\ 0 & 0 & 0 \end{pmatrix}}
\newcommand{\wignersmall}[3]{\left(\begin{smallmatrix}#1 & #2 & #3 \\ 0 & 0 & 0 \end{smallmatrix}\right)}

\newcommand{\ind}[1]{\textbf{1}_{\{#1\}}}
\newcommand{\mind}[2]{\underset{\scriptstyle{#2}}{\scriptstyle{#1}}}

\begin{document}

\title[On efficient construction of stochastic moment matrices]{On efficient construction of stochastic moment matrices}

\author{Harri Hakula}
\address{Department of Mathematics and Systems Analysis, Aalto University, FI-00076 Aalto, Finland}
\curraddr{}
\email{harri.hakula@aalto.fi}

\author{Matti Leinonen}
\address{Department of Mathematics and Systems Analysis, Aalto University, FI-00076 Aalto, Finland}
\email{matti.leinonen@aalto.fi}
\thanks{This work was supported by the Finnish Doctoral Programme in Computational Sciences FICS and by the Academy of Finland (decision 267789)}

\subjclass[2010]{Primary 35R60; Secondary 65N30 65C20 60H15} 

\date{}

\dedicatory{}

\begin{abstract}

We consider the construction of the stochastic moment matrices that appear in the typical elliptic diffusion problem considered in the setting of stochastic Galerkin finite element method (sGFEM).
Algorithms for the efficient construction of the stochastic moment matrices are presented for certain combinations of affine/non-affine diffusion coefficients and multivariate polynomial spaces.
We report the performance of various standard polynomial spaces for three different non-affine diffusion coefficients in a one-dimensional spatial setting and compare observed Legendre coefficient convergence rates to theoretical results.
\end{abstract}

\maketitle

\section{Introduction}
\label{sec:introduction}
Continuing progress of efficient deterministic numerical solvers for parametric partial 
differential equation models requires advances in several areas, including,
the parametrization of the uncertain and spatially inhomogeneous input,
the mathematical analysis of parametric differential equations,
and the actual development of the numerical solvers.
Advanced engineering applications are emerging as well. For instance, in
\cite{Yos15} it is noted that
more realistic and flexible models for stochastic input (SI) are needed.
This work extends computationally efficient modeling options in the context of 
stochastic Galerkin finite element method (sGFEM) with a tensor basis, focusing on the specific 
problem of construction of the associated moment matrices.
The structure of the induced graph of these matrices, and thus computational
complexity both in storage and time, depends on the chosen model
of stochastic input. This problem has been studied in the special case of
affine stochastic input \cite{Bieri09a,Bieri09b,Bieri09c}.
In this paper, the construction algorithms are extended to cover a much larger class of SI,
the non-affine models.
These ideas were motivated by the
conductivity reconstruction problems where the non-affine stochastic input arises naturally
via projection of the probability density function of the truncated 
multinormal distribution onto multivariate Legendre polynomial basis~\cite{Hyvonen14}.
Under very general assumptions, the computational complexity of the generalized algorithms
is asymptotically optimal. 
The numerical results further confirm the correctness
of the resulting systems. 

Let us introduce the model problem and state the main result more precisely.
We consider the following model elliptic diffusion problem:
Let $(\Omega,\Sigma,P)$ be a probability space.
Find random field $u \in L_P^2(\Omega, H_0^1(D))$ such that
\begin{align}
\label{equ:modelproblem}
\left\{
  \begin{array}{ll}
    -\nabla \cdot (a(\omega,\x) \nabla u(\omega,\x)) = f(\x) &\textrm{ in } D,\\
    u(\omega,\x) = 0 &\textrm{ on } \partial D
  \end{array}
\right.
\end{align}
holds $P$-almost surely for a load $f \in L^2(D)$ and 
a given strictly positive diffusion coefficient $a \in L^\infty(\Omega \times D)$ with lower and upper bounds 
$a_{\textrm{min}}, a_{\textrm{max}}$
such that
\begin{align}
\label{equ:assumptionona}
P\left(\omega \in \Omega \ : \ 0 < a_{\textrm{min}} \leq 
\underset{\x \in D}{{\rm ess}\,\! \inf}\, a(\omega,\x) \leq
\underset{\x \in D}{{\rm ess}\,\! \sup}\, a(\omega,\x) \leq  a_{\textrm{max}}\right) = 1.
\end{align}
Moreover, the physical domain $D$ is a bounded domain in $\R^d$, where $d\in\N$, with a smooth enough boundary and $L_P^2(\Omega, H_0^1(D))$ is a Bochner space; see, e.g., \cite{Schwab11a} and the references therein for more information.
The stochastic input, i.e., the diffusion coefficient, in the model problem is usually assumed to be given in the following \emph{affine} form:
\begin{definition}[Affine diffusion coefficient]
The affine diffusion coefficient is defined as
\begin{align}
\label{equ:klexpansion}
a(\omega,\x) = a_0(\x) + \sum_{m \geq 1} a_m(\x)Y_m(\omega),
\end{align}
where $\{a_m\}_{m \geq 0}$ are some suitable spatial functions and $\{Y_m\}_{m \geq 1}$ is a family of random variables.
This affine form is often obtained by using the Karhunen--Lo\`eve expansion; see, e.g., \cite{Adler07}.
\end{definition}

As noted above, in practical applications we might encounter \emph{non-affine} diffusion coefficients, and for example, in \cite{Chkifa15} it is stated that many practically relevant parametric PDEs are nonlinear and depend on the parameters $\{Y_m\}_{m \geq 1}$ in a non-affine manner.
Formally, we define the relevant concepts as follows:
\begin{definition}[Support of a multi-index]
\label{def:support}
The support of a multi-index $\eta\in\mathbb{N}_{0}^\infty$ is defined as
$
\supp{\eta} = \{ m\in\mathbb{N} \ | \ \eta_m\neq 0\}.
$
\end{definition}

\begin{definition}[Finitely supported multi-indices]
The set of finitely supported multi-indices $(\mathbb{N}_{0}^\infty)_c$ is defined by
\[
(\mathbb{N}_{0}^\infty)_c = \{ \eta\in\mathbb{N}_{0}^\infty\ | \ |\supp{\eta}|<\infty\} \subset \mathbb{N}_{0}^\infty,
\]
where we use $|\supp{\eta}|$ to denote the cardinality of $\supp{\eta}$.
We define the set $(\Z^\infty)_c$ in a similar way.
\end{definition}
\begin{definition}[Non-affine diffusion coefficient]
The non-affine diffusion coefficient is given by
\begin{align}
\label{equ:nonaffinekl}
a(\omega,\x) = a_0(\x) + \sum_{\mu \in \Xi} a_\mu(\x)Y^\mu(\omega),
\end{align}
where $\Xi$ is a subset of finitely supported multi-indices,
$a_0$ and $\{a_\mu\}_{\mu\in\Xi}$ are some suitable spatial functions,
and 
\begin{align*}
Y^\mu(\omega):=
\prod_{m=1}^\infty Y_m^{\mu_m}(\omega)= 
\prod_{m\in\supp{\mu}} Y_m^{\mu_m}(\omega).
\end{align*}
\end{definition}

For example, 
the (typical) instance 
\begin{align*}
a(\omega,\x) = a_0(\x) + \left(\sum_{m \geq 1} a_m(\x)Y_m(\omega)\right)^2
\end{align*}
considered in \cite{Chkifa15} fits the non-affine form \eqref{equ:nonaffinekl}.
In this paper, we refer with \emph{non-affine} to the form \eqref{equ:nonaffinekl} and we do not consider other non-affine type diffusion coefficients, e.g., log-normal,  unless explicitly stated.
Notice that the non-affine case includes the affine case;
by selecting 
$\Xi = 
\{\mu \in (\mathbb{N}_{0}^\infty)_c\,|\,\mu = {\mathrm e}_n~\textrm{for some}~n\in\N\}$,
where ${\mathrm e}_n$ denotes the $n$th Euclidean basis vector of $\R^\infty$,
 the non-affine case reduces to the affine case.
Here, as in the following, we have used the convention $0^0=1$.

A standard approach is to transform the model problem \eqref{equ:modelproblem} into a parametric deterministic form by first assuming the family $\{Y_m\}_{m \geq 1}:\Omega\to\R$ to be \emph{mutually independent} with ranges $\Gamma_m$ and associating each $Y_m$ with a complete probability space $(\Omega_m, \Sigma_m, P_m)$, where the probability measure $P_m$ admits a probability density function $\rho_m:\Gamma_m\to [0,\infty)$ such that 
\[
\dd P_{m}(\omega) = \rho_m(y_m) \dd y_m, \quad y_m\in\Gamma_m,
\]
and the $\sigma$-algebra $\Sigma_m$ is assumed to be a subset of the Borel sets of $\Gamma_m$. Finally, it is assumed that the stochastic input data is {\em finite}, i.e., in the affine case we assume that there exists $\maxSV<\infty$ such that $Y_m \equiv 0$, when $m > \maxSV$, which essentially truncates the series in \eqref{equ:klexpansion} after $\maxSV$ terms, and in the non-affine case~\eqref{equ:nonaffinekl} we assume that the set $\Xi$ has a finite number of elements and $\maxSV$ as in the affine case exists.

After these transformations, the parametric deterministic weak formulation of \eqref{equ:modelproblem} is to find 
$u \in L_\rho^2(\Gamma, H_0^1(D))$
that satisfies 
\begin{align}
\label{equ:1Dmodelweakform}
\int_\Gamma \int_D a(\y,\x) \nabla u(\y,\x) \cdot \nabla v(\y,\x) \rho(\y) \dd\x \dd\y = 
\int_\Gamma \int_D f(\x) v(\y,\x) \rho(\y) \dd \x \dd \y
\end{align}
for all $v \in L_\rho^2(\Gamma, H_0^1(D))$, where 
$
\y := (y_1,y_2,\ldots)\in\Gamma:= \Gamma_1\times\Gamma_2\times\ldots
$
and
$
\rho(\y)\dd \y := \prod_{m\geq 1}\rho_m(y_m)\dd y_m.
$
The unique solvability of \eqref{equ:1Dmodelweakform} follows from the Lax--Milgram lemma.

By discretization, the weak formulation~\eqref{equ:1Dmodelweakform} reduces to the following linear system of equations in the affine case \eqref{equ:klexpansion}:
\begin{align}
\label{equ:reslinearsystem1}
\left(G_0 \otimes A_0 + \sum_{m=1}^{\maxSV} G_m \otimes A_m\right)\mathbf{u} = g_0 \otimes f_0,
\end{align}
where 
$\{A_m\}_{m=0}^{\maxSV}$ are standard finite element (FEM) matrices,
$f_0$ is a standard load vector,
$\{G_m\}_{m\geq0}$ are stochastic moment matrices, and
$g_0$ is a stochastic load vector; see \cite{Bieri09c} for details.

For the stochastic load vector and moment matrices, we recall that each multi-index $\eta\in (\mathbb{N}_{0}^\infty)_c$ determines a multivariate polynomial $\Phi_\eta(\y)$.
\begin{definition}[Multivariate polynomial]
\label{def:multivariatepolynomial}
Let $\eta\in (\mathbb{N}_{0}^\infty)_c$. The multivariate polynomial $\Phi_\eta(\y)$, also called ``chaos polynomial'', is defined as 
\begin{align}
\label{equ:multivariatepolynomial}
\Phi_\eta(\y) = 
\prod_{m=1}^\infty \phi_{\eta_m}(y_m) = 
\prod_{m\in\supp{\eta}} \phi_{\eta_m}(y_m),
\end{align}
where $\{\phi_m\}_{m \geq 0}$ is a suitably orthonormalized (univariate) polynomial sequence with $\phi_0 \equiv 1$. 
\end{definition}
In this paper, we assume that $\{\phi_m\}_{m \geq 0}$ is either the Legendre or the (probabilists') Hermite polynomial family depending on the family $\{Y_m\}_{m \geq 1}$ as those are typical choices in the sGFEM setting; see Appendix~\ref{sec:orthopolys} for the definition of the Legendre and Hermite polynomials.
We call the different cases as the Legendre case and the Hermite case, respectively.

\begin{remark}
\label{rem:hermiteremark}
We consider the Hermite case mainly for completeness as the condition \eqref{equ:assumptionona} combined with the non-affine form~\eqref{equ:nonaffinekl} has to be relaxed for the use of normally distributed random variables $\{Y_m\}_{m \geq 1}$; we refer to \cite{Babuska10, BieriThesis09,Charrier12,Gittelson10} for possible relaxations of condition \eqref{equ:assumptionona}.
We content ourselves by providing the formulas for the stochastic moment matrices in the Hermite case, and in the numerical experiments of Section~\ref{sec:ne}, we consider only the Legendre case.
\end{remark}

In the affine case, the stochastic load vector is given by
\begin{align}
\big[g_0\big]_{\alpha} := \int_\Gamma \Phi_\alpha(\y) \rho(\y) \dd \y
\end{align}
and the elements of the stochastic moment matrices are 
\begin{equation}
\begin{aligned}
\label{equ:smmdef}
\big[G_0\big]_{\alpha,\beta} &:= \int_\Gamma \Phi_\alpha(\y) \Phi_\beta(\y) \rho(\y) \dd \y\quad \textrm{and}\\
\big[G_m\big]_{\alpha,\beta} &:= \int_\Gamma y_m \Phi_\alpha(\y) \Phi_\beta(\y) \rho(\y) \dd \y, \quad m\geq 1,
\end{aligned}
\end{equation}
where 
$\alpha, \beta \in \Lambda \subset (\mathbb{N}_{0}^\infty)_c$. 
The index set $\Lambda$ is finite and consists of suitably chosen finitely supported multi-indices; see \cite{Schwab11a} and the references therein for more information. 
In an ideal case, the index set $\Lambda$ is chosen so that
\begin{align*}
\sum_{\mu \in \Lambda} u_{\mu}( \x)\Phi_{\mu}(\y),
\end{align*}
where $u_{\mu} := \int_\Gamma u(\y,  \cdot) \Phi_\mu(\y) \rho(\y) \dd \y \in H_0^1(D)$,
gives a good approximation to the solution of~\eqref{equ:1Dmodelweakform}; in practice, the index set is often chosen more or less arbitrarily.
It is known that the stochastic moment matrices $\{G_m\}_{m\geq1}$ exhibit a nontrivial sparsity pattern~\cite{Bieri09c}.
($G_0$ is a diagonal matrix.)

Efficient construction of the stochastic moment matrices~\eqref{equ:smmdef} is crucial in the sGFEM setting 
as it easily becomes one of the bottlenecks for the method when the number of multi-indices in $\Lambda$ increases. One solution is to build the index set $\Lambda$ so that an efficient construction of the moment matrices is possible; Bieri et al. have given such an algorithm in \cite{Bieri09a} for certain index set types.

In the non-affine case \eqref{equ:nonaffinekl}, the resulting linear system is (cf.~\eqref{equ:reslinearsystem1})
\begin{align}
\label{equ:reslinearsystem2}
\left(G_0 \otimes A_0 + \sum_{\mu\in\Xi} G^\mu \otimes A_\mu\right)\mathbf{u} = g_0 \otimes f_0,
\end{align}
where 
$A_0$ and $\{A_\mu\}_{\mu\in\Xi}$ are standard FEM matrices and
the stochastic moment matrices $\{G^\mu\}_{\mu\in\Xi}$ are given by
\begin{align}
\label{equ:SMM}
\big[G^\mu\big]_{\alpha,\beta} 
&:= \int_\Gamma
\left(\prod_{m=1}^\infty y_m^{\mu_m}\right) 
\Phi_\alpha(\y) 
\Phi_\beta(\y) 
\rho(\y) 
\dd \y
\end{align}
with $\alpha$ and $\beta$ as above. With this notation, the matrix $G_m$, where $m \in \mathbb{N}$, is given by $G^{{\mathrm e}_m}$
and $G_0$ is given with $\mu=(0,0,\ldots)$. 
As above, these moment matrices employ a nontrivial sparsity pattern.

The efficient construction of the matrices $\{G^\mu\}_{\mu\in\Xi}$ is inspired by
\cite[Algorithm~4.13]{Bieri09a} for $\{G_m\}_{m\geq 0}$.
We present a non-recursive version of the original
and generalize it for $\{G^\mu\}_{\mu\in\Xi}$. 
This leads to a novel setup for updating the necessary auxiliary information in contrast
to~\cite{Bieri09a} (and its research report version~\cite{Bieri09b}).
This allows us to form the stochastic moment matrices for the model problem \eqref{equ:modelproblem} efficiently when the stochastic input is given in a non-affine form \eqref{equ:nonaffinekl}. 

The computational complexity depends on the set of multi-indices $\Lambda$ and the chosen model of the stochastic input.
However, the latter part has finite complexity in the sense that it has to be fully resolved and thus
it is not asymptotic. Therefore, the computational complexity is ultimately determined by the set of multi-indices only.
However, the situation is similar to all sparse versus full considerations. For a sufficiently small set
of multi-indices it is possible that the stochastic input determines the \textit{observed} complexity.
Asymptotically, we get the complexity estimate $O(|\Lambda|^{1+\epsilon})$, where $0 < \epsilon \leq 1$, 
and in many practical cases $\epsilon \ll 1$.

The rest of the paper is organized as follows.
In Section~\ref{sec:preliminaries}, we give definitions for the multi-index sets $\Lambda$ considered in this paper and fix some terminology and conventions related to multi-indices.
Section~\ref{sec:smm} gives steps for constructing the stochastic moment matrices $\{G^\mu\}_{\mu\in\Xi}$ efficiently. 
Algorithms for the construction of the multi-indices and neighbour matrices required for $\{G^\mu\}_{\mu\in\Xi}$ are presented in Section~\ref{sec:cm}, and our numerical experiments are presented in Section~\ref{sec:ne}.
In the numerical experiments, we compare the performance of different multi-index sets and compare observed Legendre coefficient convergence rates to theoretical results.
Finally, we conclude with discussion in Section~\ref{sec:co}.
 In Appendix~\ref{sec:orthopolys}, we recall some essential properties of the Legendre and Hermite polynomials which were employed in Section~\ref{sec:smm}, and Appendices~\ref{App:A}--\ref{App:D} contain the actual pseudocodes for the algorithms presented in Section~\ref{sec:cm}.

\section{Preliminaries}
\label{sec:preliminaries}

To improve the readability of this paper, we start by introducing the six multi-index sets considered in this paper: 
\begin{enumerate}[1.]
\item isotropic tensor product ($\isoTP$) index set,
\item isotropic total degree ($\isoTD$) index set,
\item anisotropic tensor product ($\aTP$) index set,
\item anisotropic total degree ($\aTD$) index set,
\item threshold sequence ($\TS$) index set, and
\item the best $M$-terms ($\bestM$) index set.
\end{enumerate}

Standard choices for the index set $\Lambda$ are 
the 
$\isoTP$ and 
$\isoTD$ index sets. Selecting the optimal index set a priori in the sGFEM setting is an active research question; see, e.g., \cite{Beck12, Bieri09a, Back11} and the references therein for more information.

\begin{definition}[Isotropic tensor product index set]
Let $N \in \N$ and $K \in \N_0$.
The 
$\isoTP$ index set is given by
\begin{align}
\label{equ:TPcond}
\isoTP(N,K) = \left\{\alpha\in (\mathbb{N}_{0}^\infty)_c~|~\max_{n=1,\ldots,N}{\alpha_n\leq K}; \alpha_n = 0, n>N  \right\}.
\end{align}
\end{definition}

\begin{definition}[Isotropic total degree index set]
Let $N \in \N$ and $K \in \N_0$. 
The 
$\isoTD$ index set is defined as
\begin{align}
\label{equ:TDcond}
\isoTD(N,K) = \left\{\alpha\in (\mathbb{N}_{0}^\infty)_c~|~\sum_{n=1}^{N}\alpha_n\leq K; \alpha_n = 0, n>N  \right\}.
\end{align}
\end{definition}
The isotropic index sets are often preferred due to their easiness of construction.
However, the cardinalities of 
the $\isoTP$ and $\isoTD$ 
index sets are $(K+1)^N$ and ${N+K \choose K}$, respectively,
and hence the isotropic index sets 
grow rapidly when either $N$ or $K$ is increased.
Therefore, as the $\isoTD$ index set grows slower than the $\isoTP$ index set,
the $\isoTD$ index set is often preferred over the $\isoTP$ index set.

The isotropic index sets give equal weight to the $N$ dimensions, and as in some applications we might want to prefer some of the dimensions, the {\em anisotropic} versions of the isotropic index sets are also used.

\begin{definition}[Anisotropic tensor product index set]
Let $N \in \N, K \in \N_0,$ and $\mu = (\mu_1,\ldots,\mu_N)\in\R_+^N$ be a vector of positive weights. 
The {\em anisotropic tensor product} ($\aTP$) index set is given by
\begin{align}
\label{equ:TPcondaniso}
\aTP(N,K,\mu) = \left\{\alpha\in (\mathbb{N}_{0}^\infty)_c~|~\max_{n=1,\ldots,N}{\mu_n\alpha_n\leq \mu_{\min}K}; \alpha_n = 0, n>N \right\}.
\end{align}
Here and in the following, $\R_+:=\{x\in\R\,|\,x>0\}$ and $\mu_{\min} := \min(\mu)$.
Similarly, we define $\mu_{\max} = \max(\mu)$.
\end{definition}

\begin{definition}[Anisotropic total degree index set]
Let $N \in \N, K \in \N_0,$ and $\mu = (\mu_1,\ldots,\mu_N)\in\R_+^N$ be a vector of positive weights. 
The {\em anisotropic total degree} ($\aTD$) index set is defined as
\begin{align}
\label{equ:TDcondaniso}
\aTD(N,K,\mu) = \left\{\alpha\in (\mathbb{N}_{0}^\infty)_c~|~\sum_{n=1}^{N}\mu_n\alpha_n\leq \mu_{\min}K; \alpha_n = 0, n>N \right\}.
\end{align}
\end{definition}
The weight vector $\mu$ is used to give different weights to the $N$ dimensions, and thus the cardinalities of the anisotropic index sets depend heavily on the chosen weight vector.
Without loss of generality, we assume that the weight vector $\mu$ used in the $\aTP$ and $\aTD$ index sets is non-decreasing,
i.e.,
$\mu_{\min} = \mu_1 \leq \mu_2 \leq \cdots \leq \mu_N$.
(If it is not the case, we permute the $N$ dimensions according to the weight vector.)
We refer with $\TP$ and $\TD$ to both anisotropic and isotropic versions of the tensor product and total degree index sets, respectively. Notice, that by using a constant weight vector, the anisotropic index sets reduce to the corresponding isotropic index sets.

Closely related to the $\TD$ spaces is, what we call, the {\em threshold sequence} ($\TS$) index set.
\begin{definition}[Threshold sequence index set]
Let $0 < \varepsilon \leq 1$ and $\mu$ be a non-increasing sequence of nonnegative real numbers bounded above by one and tending to zero, i.e., 
\[
\mu \in c_0 := \left\{ \mu = ( \mu_1,\mu_2,\ldots )\in \R_0^\infty~|~\lim_{m\to\infty} \mu_m = 0 \quad {\rm and} \quad 1 > \mu_1 \geq \mu_2 \geq \dots \right\},
\]
where $\R_0 := \{x \in \R \,|\,x \geq 0\}$.
The $\TS$ index set is given by
\begin{align}
\label{equ:TScond}
\TS(\mu,\varepsilon) = \left\{ \alpha\in(\mathbb{N}_{0}^\infty)_c ~|~ \mu^\alpha := \prod_{m\in\N}\mu_m^{\alpha_m} = \prod_{m\in\supp\alpha}\mu_m^{\alpha_m} \geq \varepsilon  \right\}.
\end{align}
In \cite{Bieri09a}, essentially the same index set as $\TS(\mu,\varepsilon)$ is denoted with $\Lambda_\varepsilon(\mu)$. (In \cite{Bieri09a}, $\R_0^\infty$ is replaced with $\R_+^\infty$.) 
\end{definition}

By noticing that the $\aTD$ set condition 
\[
\sum_{n=1}^N \mu_n \alpha_n \leq \mu_{\min} K
\]
can be written as
\[
\prod_{n=1}^N\tilde\mu_n^{\alpha_n} \geq \varepsilon
\]
by defining
\begin{align*}
\tilde\mu = \left(\exp\left(-\frac{\mu_1}{\mu_{\min}}\right),\ldots,\exp\left(-\frac{\mu_N}{\mu_{\min}}\right)\right)
\,\textrm{ and }\,
\varepsilon = \exp(-K),
\end{align*}
an $\aTD(N,K,\mu)$ index set can be considered as a $\TS(\tilde\mu,\varepsilon)$ index set by permuting and padding $\tilde\mu$ so that the vector is non-increasing and belongs to $\R_0^\infty$.
In most cases, we are interested to construct an index set with less than some maximum number of multi-indices and not interested in the actual parameters used to construct the index set.
Hence, in the anisotropic cases it is often enough to define the weight vector $\mu$ up to a positive constant $\frac{c}{\mu_\textrm{min}}$, where $c\in\R_+$, by using the vector
\[
g = \left(c\frac{\mu_1}{\mu_{\min}},\ldots,c\frac{\mu_N}{\mu_{\min}}\right) \in\R_+^N.
\]
For example, the $\aTD$ index set condition \eqref{equ:TDcondaniso}
can be written as 
\[
\exp\left(-\sum_{n=1}^N g_n \alpha_n\right)
\geq \exp(-cK)
\]
and by considering $\exp(-cK)$ as a real number between zero and one independent of $c$ and $K$, we can control the number of elements in the index set by varying the value of $\exp(-cK)$.
To summarize:
\begin{itemize}
\item The $\aTD$ index set can be considered as the $\TS$ index set.
\item The $\TS$ index set allows us in many practical cases to define the weight vector $\mu$ in the $\TD$ index set up to a constant.
\end{itemize}
Similarly in the $\aTP$ case, it is often enough to define the weight vector $\mu$ up to a positive constant.

Finally, the $\bestM$ index set is obtained from an existing index set.
\begin{definition}[Best $M$-terms index set]
The $\bestM$ index set is obtained from an existing index set $\Lambda$ by associating each element of $\Lambda$ with a real valued coefficient, usually the norm of $u_\mu$, after which the elements of $\Lambda$ are sorted in a non-increasing order based on the corresponding coefficients. The first $M$-indices from the resulting ordered sequence form the $\bestM$ index set.
\end{definition}
Using the $\bestM$ index set in practice is often impractical as to construct the index set, i.e., to obtain the norms of $u_\mu$, we first compute an sGFEM solution corresponding to a large index set and there is often no benefit in considering smaller index sets than the large index set already used to construct the sGFEM solution. 
Hence, we focus on the other index sets when discussing the obtained results and use the $\bestM$ index set only as a reference index set. 
However, there are at least two cases (not considered in this paper) where it might be useful to actually consider the $\bestM$ index sets.
First, evaluating the sGFEM solution might be expensive and one option to reduce the evaluation cost is to reduce the size of the index set using the $\bestM$ index set.
This technique will, in general, increase the approximation error but by dropping the multi-indices corresponding to the smallest $u_\mu$, we expect the approximation error to increase only slightly.
Second, if we are using an adaptive scheme, like reservoir sampling~\cite{Vitter85}, to construct the index set, it may be useful to consider $\bestM$ index sets; constructing the index set using randomized algorithms is left for future studies.

Let us close this section with some terminology and conventions related to multi-indices.
In this paper, the addition, subtraction, and product of multi-indices is carried out componentwise and, if necessary, the elements of $(\N_0^\infty)_c$ are considered as elements of $(\Z^\infty)_c$.
Moreover, we use the following definitions for the length and parenthood condition of a multi-index.

\begin{definition}[Length of a multi-index]
Let $\alpha \in (\mathbb{N}_{0}^\infty)_c$. The length of $\alpha$ is defined as 
\[
\length{\alpha}=\max(\supp{\alpha} \cup \{0\}),
\]
where the support of a multi-index was defined in Definition~\ref{def:support}.
\end{definition}

\begin{definition}[Parenthood condition of a multi-index]
\label{def:parenthoodcondition}
Let $\alpha$ and $\beta$ be two multi-indices. We call $\alpha$ the parent of $\beta$ or $\beta$ the child of $\alpha$ if there exists $n\in \N$ greater or equal to $\length{\alpha}$ such that $\beta_m = \alpha_m + \delta_{mn}$ for all $m \in \N$, where $\delta_{mn}$ is the Kronecker's delta.
\end{definition}

In the algorithms presented in this paper, we store the parenthood relations between the multi-indices of a index set $\Lambda$ in a variable denoted with $\PtC$ in such a way that the children of $\alpha\in\Lambda$ are given in the vector $\PtC[\alpha]$. (Actually, the children of $\alpha$ are given in the vector $\PtC[a]$, where $a\in\N$ is an index corresponding to $\alpha$, but for notational reasons we write $\PtC[\alpha]$ in the text and use the correct form $\PtC[a]$ in the actual pseudocodes.)
Moreover, each multi-index $\alpha \in (\mathbb{N}_{0}^\infty)_c$ or vector $\alpha \in (\Z^\infty)_c$ is stored in the presented pseudocodes in a sparse format 
\[
\alpha \equiv \{(m,\alpha_m)\,|\,\alpha_m \neq 0 \}
\]
and each multi-index $\alpha \in \TS(\mu,\varepsilon)$ is stored as
\[
\alpha \equiv \{\{(m,\alpha_m)\,|\,\alpha_m \neq 0 \}, \mu^\alpha \};
\]
for example, the multi-index $(0,1,2,0,\ldots)$ would be stored in $\TS(\mu,\varepsilon)$ as
\[
\{\{(2,1),(3,2)\},\mu_2^1 \mu_3^2\}
\]
if it belongs to the set, i.e., if $\mu_2^1 \mu_3^2 \geq \varepsilon$.
In the pseudocodes, we denote with $\alpha[m]$, where $m\in\N$, the $m$th pair in the sparse format of $\alpha$. Hence, in general, $\alpha[m]$ does not necessarily correspond to $\alpha_m$. Moreover, we denote with $\alpha[-1]$ the last pair in the sparse format of $\alpha$, and we use short-circuit evaluation of Boolean operators, i.e., the second argument of a Boolean expression is evaluated only if the first argument does not suffice to determine the value of the expression.

\section{Construction of stochastic moment matrices}
\label{sec:smm}

Combining Definition~\ref{def:multivariatepolynomial} to \eqref{equ:SMM}, we can write the element of $G^\mu$ as 
\begin{align*}
\big[G^\mu\big]_{\alpha,\beta} 
&= \int_\Gamma
\left(\prod_{m=1}^\infty y_m^{\mu_m}\right) 
\Phi_\alpha(\y) 
\Phi_\beta(\y) 
\rho(\y) 
\dd \y \\
&= \int_\Gamma
\left( \prod_{m=1}^\infty y_m^{\mu_m} \right) 
\left( \prod_{m=1}^\infty \phi_{\alpha_m}(y_m) \right) 
\left( \prod_{m=1}^\infty \phi_{\beta_m}(y_m) \right) 
\rho(\y) 
\dd \y \\
&= \prod_{m\in\supp{\mu + \alpha + \beta}} 
\big[G_m^{\mu_m}\big]_{\alpha_m+1, \beta_m+1},
\end{align*}
where 
$\alpha$ and $\beta$ are as before
and 
the matrix $G_m^{\mu_m}$ is given by 
(here, $\alpha_m+1$ and $\beta_m+1$ are used as general indices and are independent of $m$)
\[
\big[G_m^{\mu_m}\big]_{\alpha_m+1, \beta_m+1} = 
\int_{\Gamma_m}
y_m^{\mu_m} 
\phi_{\alpha_m}(y_m)
\phi_{\beta_m}(y_m)
\rho_m(y_m) \dd y_m.
\]

We do the following assumptions in the Legendre and Hermite cases, respectively:
\begin{assumption}
\label{ass:legendre}
In the Legendre case, we assume that 
\[
\Gamma_m = [-1,1] := \Gamma_0\quad \textrm{ and }\quad \rho_m(y) = \frac{1}{2} := \rho_0(y)\] for all $m \geq 1$.
\end{assumption}

\begin{assumption}
\label{ass:hermite}
In the Hermite case, we assume that 
\[
\Gamma_m = \R := \Gamma_0\quad \textrm{ and }\quad \rho_m(y) = \exp(-y^2/2)/\sqrt{2\pi} := \rho_0(y)
\]
for all $m \geq 1$. 
(Recall remark~\ref{rem:hermiteremark} close to the end of Section~\ref{sec:introduction}.) 
\end{assumption}

Due to Assumptions~\ref{ass:legendre} and \ref{ass:hermite},
$G_m^{k} = G_n^{k}$ for all $k\in\N_0$ and $m,n \geq 1$ (each of the stochastic dimensions has the same range and probability density), and hence it is enough to consider only the matrix $K^k$ given by (here again $l+1$ and $m+1$ are general indices)
\[
\big[K^k\big]_{l+1, m+1} := 
\int_{\Gamma_0}
y^{k} 
\phi_{l}(y)
\phi_{m}(y)
\rho_0(y) \dd y,
\]
where $k,l$, and $m$ are nonnegative integers,
as we can write 
\begin{align}
\label{equ:Gdef}
\big[G^\mu\big]_{\alpha,\beta} = 
\prod_{m\in\supp{\mu + \alpha + \beta}}
\big[K^{\mu_m}\big]_{\alpha_m+1, \beta_m+1}.
\end{align}
By recalling that we are using \emph{orthonormal} multivariate polynomials, $K^0$ is an identity matrix, and therefore $G^\mu$ can be computed as 
\begin{align}
\label{equ:Gformula}
\big[G^\mu\big]_{\alpha,\beta} = 
\left\{
  \begin{array}{ll}
\underset{m\in\supp{\mu}}\prod\big[K^{\mu_m}\big]_{\alpha_m+1, \beta_m+1},&\textrm{when } \big[G^\mu\big]_{\alpha,\beta} \neq 0,\\
    0, &\textrm{otherwise}.
  \end{array}
\right.
\end{align}
To use \eqref{equ:Gformula}, we need to locate the nonzero elements in $G^\mu$, which amounts to understanding the structure of $K^k$ as is evident from~\eqref{equ:Gdef}.

Using the triple integral and inversion formulas for the Legendre and Hermite polynomials from Appendix~\ref{sec:orthopolys}, we can locate and compute the nonzero entries in the matrix $K^k$.

\begin{theorem}
\label{thm:KforLegendre}
In the Legendre case, we have
\begin{align}
\label{equ:KforLegendre}
\big[K^k\big]_{l+1, m+1} = \sum_{n=0}^k l_n^k L(l,m,n),
\end{align}
where 
\[
l^k_n = 
\ind{k-n~\textrm{is even}}
{k \choose n} 
n!\,
\sqrt{2n+1}
\frac{(k-n-1)!!}{(k+n+1)!!},
\quad 0 \leq n \leq k,
\]
and
\[
L(l,m,n) = \sqrt{(2l+1)(2m+1)(2n+1)}\wigner{l}{m}{n}^2.
\]
\end{theorem}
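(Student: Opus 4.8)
The plan is to reduce each entry $\big[K^k\big]_{l+1,m+1}$, which under Assumption~\ref{ass:legendre} is the single-variable integral
\[
\big[K^k\big]_{l+1,m+1} = \frac{1}{2}\int_{-1}^{1} y^{k}\,\phi_l(y)\,\phi_m(y)\,\dd y,
\]
to a finite linear combination of triple products of orthonormalized Legendre polynomials. The two ingredients I would draw from Appendix~\ref{sec:orthopolys} are the \emph{inversion formula}, which expands the monomial $y^k$ in the Legendre basis, and the \emph{triple integral formula}, which evaluates the integral of a product of three Legendre polynomials through Wigner $3j$ symbols.

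First I would expand the monomial. Since $\{\phi_n\}$ is orthonormal with respect to $\rho_0\equiv\tfrac{1}{2}$ on $[-1,1]$ and $y^k$ has degree $k$, there is a finite expansion $y^k=\sum_{n=0}^{k} c_n^k\,\phi_n(y)$ with $c_n^k=\frac{1}{2}\int_{-1}^{1} y^k\phi_n(y)\,\dd y$. I would then identify $c_n^k$ with $l_n^k$: the parity of the integrand forces $c_n^k=0$ whenever $k-n$ is odd, which accounts for the factor $\ind{k-n~\textrm{is even}}$, and for $k-n$ even the moment integral evaluates to the stated double-factorial expression. Here one must keep track of the normalization $\phi_n=\sqrt{2n+1}\,P_n$, which is exactly what produces the $\sqrt{2n+1}$ factor appearing in $l_n^k$ relative to the moment of the classical Legendre polynomial $P_n$.

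Next I would substitute the expansion into the integral and interchange the finite sum with integration:
\[
\big[K^k\big]_{l+1,m+1} = \sum_{n=0}^{k} l_n^k \cdot \frac{1}{2}\int_{-1}^{1} \phi_n(y)\,\phi_l(y)\,\phi_m(y)\,\dd y.
\]
I would then invoke the triple integral formula to recognize each remaining integral as $L(l,m,n)$. For the classical polynomials the formula reads $\int_{-1}^{1} P_l P_m P_n\,\dd y = 2\,\wignersmall{l}{m}{n}^2$; converting to the orthonormal polynomials introduces the factor $\sqrt{(2l+1)(2m+1)(2n+1)}$ and cancels the $\tfrac{1}{2}$ against the $2$, giving exactly $L(l,m,n)=\sqrt{(2l+1)(2m+1)(2n+1)}\,\wignersmall{l}{m}{n}^2$. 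Substituting this identification yields \eqref{equ:KforLegendre}.

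I expect the main obstacle to be the bookkeeping of normalization constants: both appendix formulas are naturally phrased for the classical Legendre polynomials $P_n$, so I would need to propagate carefully the factors $\sqrt{2j+1}$ (one per polynomial) together with the weight $\tfrac{1}{2}$ through both the inversion coefficients and the triple product, and confirm that they combine to give precisely $l_n^k$ and $L(l,m,n)$ as stated. Once these conversions are pinned down, the rest is a routine linearity argument with no convergence subtleties, since every sum involved is finite.
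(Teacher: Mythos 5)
Your proposal is correct and follows essentially the same route as the paper: expand $y^k$ via the inversion formula (Theorem~\ref{thm:inversioforLegendre}), interchange the finite sum with the integral, and identify each remaining triple product integral as $L(l,m,n)$ via Theorem~\ref{thm:tintforLegendre}. The normalization bookkeeping you anticipate is already absorbed into the appendix statements, which are phrased directly for the orthonormal polynomials with the weight $\rho_0\equiv\tfrac{1}{2}$ included, so no conversion from the classical $P_n$ is needed.
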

\begin{proof}
\begin{align*}
\begin{split}
\big[K^k\big]_{l+1, m+1} 
&= \int_{-1}^1 y^k L_l(y) L_m(y) \rho_0(y) \dd y \\
&=\sum_{n=0}^k l_n^k \int_{-1}^1 L_n(y) L_l(y) L_m(y) \rho_0(y) \dd y
=\sum_{n=0}^k l_n^k L(l,m,n),
\end{split}
\end{align*}
where we have used first Theorem~\ref{thm:inversioforLegendre} and then Theorem~\ref{thm:tintforLegendre}.
\end{proof}

Combining the selection rules of $l_n^k$ and $L(l,m,n)$ from \eqref{thm:inversioforLegendre} and \eqref{equ:wignerdef}, respectively, we know that the summand in \eqref{equ:KforLegendre} is nonzero when
\begin{itemize}
\item $k-n$ is even,
\item $l+m+n$ is even, and
\item $|l-m| \leq n \leq l+ m$, where $0 \leq n \leq k$.
\end{itemize}
From the first condition, we see that if $k$ is even, $n$ is also even and then the second condition imposes $l+m$ to be even as well. Hence, $l$ and $m$ have the same parity when $k$ is even.
Similarly, when $k$ is odd, $n$ is also odd, and as a result $l$ and $m$ have the opposite parity.
From the last condition, we see that $K^k$ is a symmetric $(2k+1)$-diagonal matrix.
To sum up, $K^k$ is a symmetric $(2k+1)$-diagonal matrix with odd diagonals zero when $k$ is even and even diagonals zero when $k$ is odd. 
For example, the common case $K^1$ used in the affine case is given by~\cite{Bieri09c} (correcting a misprint) 
\begin{align*}
\big[K^1\big]_{l,m} = 
\left\{
  \begin{array}{ll}
    \frac{l}{\sqrt{(2l-1)(2l+1)}}, &m=l+1,\\
    \frac{(l-1)}{\sqrt{(2l-3)(2l-1)}}, &m=l-1,\\
    0, &\textrm{otherwise},
  \end{array}
\right.
\end{align*}
where $l,m \in \N$.
Similar results hold for the Hermite case.
\begin{theorem}
\label{thm:KforHermite}
In the Hermite case, we have
\begin{align}
\label{equ:KforHremite}
\big[K^k\big]_{l+1, m+1} =\sum_{n=0}^k h_n^k H(l,m,n),
\end{align}
where 
\[
h^k_n = 
\ind{k-n~\textrm{is even}}
{k \choose n} 
\sqrt{n!}\,
(k-n-1)!!,
\quad 0 \leq n \leq k,
\]
and 
\[
H(l,m,n) = \ind{2g~\textrm{is even}}\ind{|l-m|\leq n \leq l+m}\left[{l \choose {g-n} } {m \choose {g-n} } {n \choose {g-l} }\right]^\frac{1}{2}
\]
with $g = (l+m+n)/2$.
\end{theorem}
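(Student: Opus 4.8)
The plan is to follow verbatim the two-step linearization used in the proof of Theorem~\ref{thm:KforLegendre}, replacing each Legendre ingredient by its Hermite counterpart from Appendix~\ref{sec:orthopolys}. In the Hermite case $\phi_l$ is the orthonormalized probabilists' Hermite polynomial and $\rho_0(y) = \exp(-y^2/2)/\sqrt{2\pi}$, so that
\[
\big[K^k\big]_{l+1,m+1} = \int_\R y^k \phi_l(y)\phi_m(y)\rho_0(y)\,\dd y.
\]
First I would invoke the Hermite inversion formula, which expands the monomial in the orthonormal Hermite basis as $y^k = \sum_{n=0}^k h_n^k \phi_n(y)$ with precisely the coefficients $h_n^k$ stated in the theorem. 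Substituting this expansion and exchanging the finite sum with the integral (immediate, as the sum has finitely many terms) gives
\[
\big[K^k\big]_{l+1,m+1} = \sum_{n=0}^k h_n^k \int_\R \phi_n(y)\phi_l(y)\phi_m(y)\rho_0(y)\,\dd y,
\]
after which it remains only to identify the integral with $H(l,m,n)$.

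The second step is to apply the Hermite triple-product integral formula from the appendix, which yields $\int_\R \phi_n \phi_l \phi_m \rho_0 \,\dd y = H(l,m,n)$. This is where the selection rules enter: the integral vanishes unless $l+m+n$ is even (the parity condition, encoded by $\ind{2g~\textrm{is even}}$ because $2g = l+m+n$) and unless the triangle inequalities $|l-m|\le n\le l+m$ hold, both of which appear as the indicator factors in the stated form of $H$. Under these conditions $g = (l+m+n)/2$ is a nonnegative integer and each of $g-l$, $g-m$, $g-n$ is a nonnegative integer, so the binomial coefficients in $H(l,m,n)$ are well defined.

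The only nonroutine point is verifying that the closed form quoted for the triple product equals the binomial expression $\left[\binom{l}{g-n}\binom{m}{g-n}\binom{n}{g-l}\right]^{1/2}$. Starting from the classical linearization identity for the orthonormalized probabilists' Hermite family, which under the above conditions reads $\int_\R \phi_l \phi_m \phi_n \rho_0\,\dd y = \sqrt{l!\,m!\,n!}\,/\,[(g-l)!\,(g-m)!\,(g-n)!]$, I would rewrite each factorial quotient as a binomial coefficient using the identities $l-(g-n)=g-m$, $m-(g-n)=g-l$, and $n-(g-l)=g-m$; multiplying the three binomials and taking the square root reproduces the stated formula, since their product collapses to $l!\,m!\,n!\,/\,[(g-l)!\,(g-m)!\,(g-n)!]^2$. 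I expect the symbolic bookkeeping of these factorial-to-binomial rewrites to be the most error-prone step, while the overall logical structure is immediate from the Legendre template.
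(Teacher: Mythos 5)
Your proposal is correct and follows exactly the route the paper takes: its proof of Theorem~\ref{thm:KforHermite} is simply ``Similar to the Legendre case; use Theorems~\ref{thm:inversioforHermite} and \ref{thm:tintforHermite},'' i.e.\ expand $y^k$ via the Hermite inversion formula and then apply the triple product integral. Your extra verification that the classical factorial form $\sqrt{l!\,m!\,n!}\,/\,[(g-l)!\,(g-m)!\,(g-n)!]$ equals the stated binomial expression is accurate (the paper instead takes that form directly from Theorem~\ref{thm:lhp} in the appendix), but it does not change the argument.
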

\begin{proof}
Similar to the Legendre case; use Theorems~\ref{thm:inversioforHermite} and \ref{thm:tintforHermite}.
\end{proof}

By noticing that the selection rules in the summand of Theorem~\ref{thm:KforHermite} are the same as in the Legendre case, $K^k$ has the same structure as in the Legendre case.
The common case $K^1$ is in the Hermite case
\begin{align*}
\big[K^1\big]_{l,m} = 
\left\{
  \begin{array}{ll}
    \sqrt{l}, &m=l+1,\\
    \sqrt{l-1}, &m=l-1,\\
    0, &\textrm{otherwise},
  \end{array}
\right.
\end{align*}
where $l,m \in \N$.

Using the structure of $K^k$ and \eqref{equ:Gdef}, we see that the element $\big[G^\mu\big]_{\alpha,\beta}$ is nonzero if the following condition holds for all $m \geq 1$: $|\alpha_m-\beta_m| \leq \mu_m$ and $\alpha_m$ and $\beta_m$ have the same (opposite) parity when $\mu_m$ is even (odd). This suggests the following for the Legendre and Hermite cases: Associate each multi-index $\mu \in (\mathbb{N}_{0}^\infty)_c$ with the following set of weights
\[
W(\mu) := \{\alpha\beta\,|\,\alpha \in P(\mu) \textrm{ and } \beta \in F(\mu) \} \subset (\Z^\infty)_c,
\]
where 
\begin{align*}
P(\mu) &:= \underset{m \geq 1}{\prod}
\left\{
  \begin{array}{ll}
    \{1\}, &\mu_m=0 \textrm{ or } \mu_n=0 \textrm{ for all } n < m,\\
    \{1,-1\}, &\textrm{otherwise},
  \end{array}
\right.
\end{align*}
and
\begin{align*}
F(\mu) &:= \prod_{m \geq 1}
\left\{
  \begin{array}{ll}
    \{0,2,\cdots,\mu_m\}, &\mu_m\textrm{ is even},\\
    \{1,3,\cdots,\mu_m\}, &\mu_m\textrm{ is odd}.
  \end{array}
\right.
\end{align*}
Now, the condition for the nonzero elements in $G^\mu$ can be given as:
\begin{align}
\label{equ:Gzerocond}
\big[G^\mu\big]_{\alpha,\beta} \neq 0 \Leftrightarrow
\big[{\textstyle \sum_{w \in W(\mu)}} N^w\big]_{\alpha,\beta} \neq 0 \Leftrightarrow
\big[S^\mu\big]_{\alpha,\beta} \neq 0,
\end{align}
where $N^w$ is the neighbour matrix given by
\begin{align}
\label{equ:neighbourmatrix}
\big[N^w\big]_{\alpha,\beta} := 
\left\{
  \begin{array}{ll}
    1, &\alpha \pm w = \beta,\\
    0, &\textrm{otherwise},
  \end{array}
\right.
\end{align}
and $S^\mu$ is the summed matrix
\[
S^\mu := \sum_{w \in W(\mu)}N^w.
\]
Finally, the stochastic moment matrices $\{G^\mu\}_{\mu\in\Xi}$ for finitely supported multi-index sets $\Xi$ and $\Lambda$ can be computed in the following way:
\begin{enumerate}
\item Construct the neighbour matrices $\{N^w\}$, where $w \in W_\Xi:=\underset{\mu\in\Xi}{\bigcup}W(\mu)$.
\item Compute the summed matrices $\{S^\mu\}_{\mu \in \Xi}$ using $\{N^w\}_{w \in W_\Xi}$.$\phantom{\underset{\mu\in\Xi}{\bigcup}}$
\item Construct the matrices $\{K^k\}$, where $k \in \left\{1,\ldots,\underset{\mu\in\Xi}{\max}(\max(\mu))\right\}$, using Theorem~\ref{thm:KforLegendre} or Theorem~\ref{thm:KforHermite}.$\phantom{\underset{\mu\in\Xi}{\bigcup}}$
\item Compute the matrices $\{G^\mu\}_{\mu\in\Xi}$ using \eqref{equ:Gformula} and \eqref{equ:Gzerocond}.
\end{enumerate}
Here, $N^w$, $S^\mu$, and $G^\mu$ are symmetric (sparse) matrices of dimension 
$|\Lambda|$,
and $K^k$ is a symmetric $(2k+1)$-diagonal matrix of dimension $\underset{\alpha\in\Lambda}{\max}(\max(\alpha))+1$.

The second, third, and fourth steps from above are straightforward to carry out.
However, the first step, i.e., constructing the neighbour matrices $\{N^w\}_{w \in W_\Xi}$, is a nontrivial task. 
Straightforward methods to construct the neighbour matrices such as using the definition \eqref{equ:neighbourmatrix} directly or writing $w=\pm(\alpha - \beta)$ and looping over $\alpha$ and $\beta$ result in algorithms where we essentially loop through $W_\Xi$ once for each $\{\alpha, \beta\}$ pair, i.e., the construction requires $\mathcal{O}(|W_\Xi| |\Lambda|^2)$ operations.
In the next section, we give an algorithm based on \cite{Bieri09b} which can be used to construct hierarchical multi-index sets so that constructing neighbour matrices for these index sets require $\mathcal{O}(|W_\Xi| |\Lambda|^{1+\epsilon})$ operations, where $0 < \epsilon \leq 1$; the exact value of $\epsilon$ depends on $\Xi$ and $\Lambda$ and is much less than one in typical instances.  
The speedup is obtained by constructing the multi-indices in $\Lambda$ in a such way that locating a multi-index in $\Lambda$ does not necessarily require us to loop over entire $\Lambda$.

The cardinalities of the sets $P(\mu)$ and $F(\mu)$ are given by
\begin{align*}
|P(\mu)| = 
\left\{
\begin{array}{ll}
  2^{|\supp{\mu}|-1},&\mu \neq 0,\\
  1, &\mu = 0,
\end{array}
\right.
\textrm{ and } 
|F(\mu)| = \prod_{m=1}^{\length{\mu}} \left \lfloor \frac{\mu_m}{2}+1 \right \rfloor,
\end{align*}
respectively, from which we get the following estimate for the cardinality of $W(\mu)$:
\begin{align}
\label{equ:w}
|W(\mu)| \leq |P(\mu)| |F(\mu)| = 
\left\{
\begin{array}{ll}
  2^{|\supp{\mu}|-1} \prod_{m=1}^{\length{\mu}} \left \lfloor \frac{\mu_m}{2}+1 \right \rfloor,&\mu \neq 0,\\
  1, &\mu = 0.
\end{array}
\right.
\end{align}
From $\eqref{equ:w}$, we obtain a (conservative) upper bound for the number of (sparse) matrix additions required for the computation of the summed matrices in the second step:
\begin{align*}
\sum_{\mu\in\Xi}& |W(\mu)| \leq 1 + \sum_{\substack{\mu\in\Xi\\ \mu\neq 0}} 2^{|\supp{\mu}|-1} \prod_{m=1}^{\length{\mu}} \left \lfloor \frac{\mu_m}{2}+1 \right \rfloor\\
&\leq 1 + \sum_{\substack{\mu\in\Xi\\ \mu\neq 0}} 2^{\maxSV-1} \left(\frac{\max(\mu)}{2}+1 \right)^{\maxSV}
\leq \frac{|\Xi|}{2}\left(\underset{\mu\in\Xi}{\max}(\max(\mu))+2 \right)^{\maxSV}.
\end{align*}
Notice that the above complexity estimate depends entirely on the stochastic input, and hence the time spent on the second step is essentially imposed by the problem formulation; the simpler the stochastic input, the less time we are spending on computing the summed matrices. 
In the simplest case, i.e., in the affine one, 
$W(\mu) = \{\mu\}$, and hence $S^\mu = N^\mu$ and the second step can be skipped.

By combining the complexity estimates for the first and second steps, the overall complexity of locating nonzero elements in the stochastic moment matrices is
\begin{align*}
\mathcal{O}\left(|W_\Xi| |\Lambda|^{1+\epsilon} +  \frac{|\Xi|}{2}\left(\underset{\mu\in\Xi}{\max}(\max(\mu))+2 \right)^{\maxSV}\right).
\end{align*} 
If we assume the stochastic input in the problem formulation, i.e., $\Xi$ and $\maxSV$, to be fixed, the complexity estimate amounts to  
$\mathcal{O}\left(|\Lambda|^{1+\epsilon}\right)$.

Employing the neighbour matrices is profitable if the time used to construct them and the summed matrices is less than the time used to evaluate the zero elements in the stochastic moment matrices.
If the stochastic moment matrices are sparse, the time spent on evaluating the zero elements is essentially $\mathcal{O}(|\Lambda|^2)$ using \eqref{equ:Gdef} directly and $\mathcal{O}(|\Lambda|^{1+\epsilon})$, where $0 < \epsilon \leq 1$, if we employ the neighbour matrices.
However, if the neighbour matrices are dense, the time spent on evaluating the zero elements using \eqref{equ:Gdef} directly is practically zero, whereas the time to construct the neighbour matrices has not changed, and hence it is better to construct the stochastic moment matrices using \eqref{equ:Gdef} directly.
Be that as it may, by employing the neighbour matrices, we have been able to reduce the stochastic moment matrix construction time from several hours to a few minutes in many practical cases.

\section{Constructing hierarchical multi-indices and neighbour matrices}
\label{sec:cm}

In this section, we present algorithms that can be used to construct the $\TS, \TD$, and $\TP$ index sets and the corresponding neighbour matrices $\{N^w\}_{w \in W_\Xi}$.
We give both conceptual (Algorithms~\ref{alg:generalTS} and \ref{alg:generalNfinding}) and pseudocode (Algorithms~\ref{alg:multi1} and \ref{alg:Gmatrices}) implementations of the algorithms.
We consider first the $\TS$ index set and then give the small modifications required for the $\TD$ and $\TP$ index sets.
We conclude by considering other similar index sets.

\subsection{Threshold sequence index set}

The algorithm given in Appendix~\ref{App:A}, i.e., Algorithm~\ref{alg:multi1}, which is the stack-based version of the recursive algorithm presented in \cite{Bieri09b}, can be used to construct the index set $\TS(\mu,\varepsilon)$.
Subsequently, the neighbour matrices $\{N^w\}_{w\in W_\Xi}$ can be constructed using Algorithm~\ref{alg:Gmatrices} from Appendix~\ref{App:B}.

Algorithm~\ref{alg:multi1} takes a sequence $\mu\in c_0$ and a tolerance $\varepsilon>0$ as input and returns the index set $\TS(\mu,\varepsilon)$ together with the corresponding parenthood information (cf. Definition~\ref{def:parenthoodcondition}); the parenthood information is required in the construction of the neighbour matrices in Algorithm~\ref{alg:Gmatrices}.

\subsubsection{High-level description of the $\TS$ index set construction.}

To ease the understanding of Algorithm~\ref{alg:multi1}, a high-level description of the algorithm is given in Algorithm~\ref{alg:generalTS} and explained more in detail below. The line(s) indicated at the end of the lines in Algorithm~\ref{alg:generalTS} correspond to the relevant pseudocode lines in Algorithm~\ref{alg:multi1}.

Let $\beta$ be a child of a multi-index $\alpha$. The multi-index $\beta$ is a \textit{feasible child}
of $\alpha$ if the index set condition $\mu^\beta\geq\varepsilon$ holds.
After  $\alpha$ has been added to the index set under construction, the feasible children 
such as $\beta$
are added to the index set in the decreasing order of length before any other multi-indices are added to the index set.
Moreover, the parenthood information of the just added multi-index is recorded in a natural tree-format.
The index set $\TS(\mu,\varepsilon)$ is always initialized with zero multi-index.

The feasible children are processed in the decreasing order of length to ensure compatibility with the recursive version of the algorithm presented in \cite{Bieri09b} and to facilitate efficient neighbour matrix construction later in this section.
(Any consistent choice here would work with some minor modifications to the algorithms.)

The algorithm is guaranteed to stop at some point as there is a finite number of multi-indices for which the condition $\mu^\alpha \geq \varepsilon$ holds; see~\cite{Bieri09a} for the analysis of the running time and for bounds of the size of the index set $\TS(\mu,\varepsilon)$. The algorithm generates the set $\TS(\mu,\varepsilon)$ essentially due to the fact that if $\mu^\alpha\mu_n < \varepsilon$ then, by recalling that $\mu\in c_0$, it holds that $\mu^\alpha\mu_m < \varepsilon$ for all $m>n$ as well. Hence, if a child of the current multi-index is not feasible, we know that any sequence of one-increments to the child would not produce a multi-index that would belong to the index set, and therefore there is no need to consider it. An example run of Algorithm~\ref{alg:multi1} with intermediate steps in the spirit of Algorithm~\ref{alg:generalTS} is presented in Appendix~\ref{App:examplerun}.

\begin{algorithm}
\caption{High-level description of the $\TS$ index set construction.}
 \label{alg:generalTS}
 \SetLine 
  set current multi-index to $(0,0,\ldots)$ and add it to $\TS(\mu,\varepsilon)$\tcp*[r]{1-5}
  \While{True}{
    push all feasible children of the current multi-index into a stack in the increasing order of length\tcp*[r]{7-12}
      \lIf(\tcp*[f]{13}){stack is empty}{return $\TS(\mu,\varepsilon)$ and the corresponding parenthood information}
      set the current multi-index and the associated variables to the top state in the stack and pop the stack afterwards\tcp*[r]{14-18}
    add the current multi-index to $\TS(\mu,\varepsilon)$ and record the corresponding parenthood information\tcp*[r]{19-23}
  }
\end{algorithm}

\subsubsection{High-level description of the neighbour matrix construction}
Algorithm~\ref{alg:Gmatrices}, given in Appendix~\ref{App:B}, can be used to construct the neighbour matrices $\{N^w\}_{w\in W_\Xi}$.
The algorithm
takes the weights $W_\Xi$, the index set $\TS(\mu,\varepsilon)$, and the corresponding vector $\mu$, real number $\varepsilon$, and parenthood relations
as input, i.e., the input and output of Algorithm~\ref{alg:multi1}, and returns the neighbour matrices $\{N^w\}_{w\in W_\Xi}$.

The high-level description of Algorithm~\ref{alg:Gmatrices} is given in Algorithm~\ref{alg:generalNfinding} in a similar format as Algorithm~\ref{alg:generalTS}
and is essentially: for each $w \in W_\Xi$ loop over each $\eta \in \TS(\mu,\varepsilon)$ and if $\gamma = \eta - w$ belongs to $\TS(\mu,\varepsilon)$, locate $\gamma \in \TS(\mu,\varepsilon)$ and add the corresponding contribution to $N^w$. 
This method requires an efficient way for both 
checking whether $\gamma$ belongs to $\TS(\mu,\varepsilon)$
and locating $\gamma$ in $\TS(\mu,\varepsilon)$. 

A simple and efficient test for $\gamma = \eta - w \notin \TS(\mu,\varepsilon)$ is given by:
\[
\gamma\notin\TS(\mu,\varepsilon) \Leftrightarrow \gamma_m<0 \textrm{ for some } m \textrm{ or } \mu^\gamma = \mu^{\eta - w} = \mu^\eta / \mu^w < \varepsilon.
\]
We need to check $\gamma$ for possible negative values as after subtracting $w$ from $\eta$ we may end up with negative values in $\gamma$. Moreover, notice that we are storing the value $\mu^\eta$ already in the sparse format of $\eta$ in $\TS(\mu,\varepsilon)$, and hence it is enough to compute the value $\mu^w$ once for each $w\in W_\Xi$. 

The efficient location of $\gamma$ in $\TS(\mu,\varepsilon)$ is based on the tree structure provided by the parenthood relations. 
We can locate a given multi-index $\gamma \in \TS(\mu,\varepsilon)$ by starting from the root of the tree, i.e., $(0,0,\ldots)$, after which we move down the tree by processing the position-value pairs $(m,\gamma_m)$ in the sparse format of $\gamma$ in the increasing position order using the procedure:
\begin{enumerate}
\item Move one step down the tree to the child with the length $m$.
\item Move $\gamma_m - 1$ times down the tree selecting the child with the length $m$.
\end{enumerate}
In our implementation, the second step corresponds to selecting the first child in the child list of the current multi-index $m$ times.

For example, the multi-index $(2,0,1,0,\ldots) \equiv \{ (1,2), (3,1) \}$
in the example run of Algorithm~\ref{alg:multi1} presented in Appendix~\ref{App:examplerun} is located through the steps:
\[\lefteqn{\overbrace{\phantom{ (0,0,0) \rightarrow (1,0,0) \rightarrow (2,0,0)}}^{(1,2)}}(0,0,0) \rightarrow (1,0,0) \rightarrow \underbrace{(2,0,0) \rightarrow (2,0,1)}_{(3,1)}.\]
Here, we have used the consistent recording order of the parenthood relations
and the fact that only nonzero indices are stored in the sparse format.

\begin{algorithm}[H]
\caption{High-level description of the neighbour matrix construction.}
 \label{alg:generalNfinding}
 \SetLine 
  \For(\tcp*[f]{1-3}){$w \in W_\Xi$}{
    \For(\tcp*[f]{4-6}){$\eta \in \TS(\mu,\varepsilon)$}{
      \lIf(\tcp*[f]{7-8}){$\eta - w \notin \TS(\mu,\varepsilon)$}{continue}
      locate $\eta - w$ in $\TS(\mu,\varepsilon)$\tcp*[r]{9-22}
      add contribution to $N^w$\tcp*[r]{23}
    }
  }
  return $\{N^w\}_{w \in W_\Xi}$\tcp*[r]{26}  
\end{algorithm}

\subsection{Total degree and tensor product index sets}
\label{subsec:tdtp}
Algorithms~\ref{alg:multi1} and \ref{alg:Gmatrices} can be readily modified for the $\TD$ and $\TP$ index sets;
in Algorithm~\ref{alg:multi1} we change the definition of ``feasible children''  
and in Algorithm~\ref{alg:Gmatrices} the set condition for $\eta - w$.

In principle, the $\TD$ case does not require any modifications as it is possible to reduce the $\TD$ case to the $\TS$ case as explained in Section~\ref{sec:preliminaries}.
However, in the $\isoTD$ case we might want to modify the algorithms slightly to avoid any possible problems arising from the floating point arithmetic;
we perform a global replace of $\TS(\mu,\varepsilon)$ to $\isoTD(N,K)$ and the modifications presented in Appendix~\ref{App:C} to Algorithms~\ref{alg:multi1} and \ref{alg:Gmatrices} to bring the algorithms compatible with the set condition \eqref{equ:TDcond} instead of \eqref{equ:TScond}.

In the $\TP$ case, we need to consider only the $\aTP$ index set as the $\aTP$ index set reduces to the $\isoTP$ index set when $\mu$ is a constant weight vector.
As in the $\isoTD$ case, we perform a global replace of $\TS(\mu,\varepsilon)$ to $\aTP(N,K,\mu)$ and the modifications presented in Appendix~\ref{App:D}.

\subsection{Complexity estimates}
\label{sec:comp}
From the high-level descriptions of Algorithms~\ref{alg:generalTS} and \ref{alg:generalNfinding}, we see that the workload of the algorithms is $\mathcal{O}(|\Lambda|)$ and
$\mathcal{O}(|W_\Xi| |\Lambda| \max_{\alpha\in\Lambda} |\alpha|)$, respectively, where $|\alpha| = \sum_{n \geq 1} \alpha_n$.
The value of $\epsilon$ in the estimate $\mathcal{O}(|W_\Xi| |\Lambda|^{1+\epsilon})$ presented at the end of Section~\ref{sec:smm} can be approximated by equating
\begin{align}
\label{equ:compexp}
\max_{\alpha\in\Lambda} |\alpha| = |\Lambda|^\epsilon \Leftrightarrow \epsilon = \log\left(\max_{\alpha\in\Lambda} |\alpha|\right)/\log(|\Lambda|).
\end{align}
From the estimates
\begin{equation}
\label{equ:compa}
\begin{aligned}
\max_{\alpha\in\aTP(N,K,\mu)} |\alpha|
&\leq
\max_{\alpha\in\isoTP(N,K)} |\alpha|
= NK,\\
\max_{\alpha\in\aTD(N,K,\mu)} |\alpha|
&\leq
\max_{\alpha\in\isoTD(N,K)} |\alpha|
= K,
\end{aligned}
\end{equation}
and
\begin{equation}
\label{equ:compb}
\begin{aligned}
\left(\left\lfloor \frac{\mu_{\min}}{\mu_{\max}} K  \right\rfloor + 1 \right)^N
\leq
|\aTP(N,K,\mu)|
&\leq
|\isoTP(N,K)|
= (K+1)^N,
\\
{{N + \left\lfloor \frac{\mu_{\min}}{\mu_{\max}} \right\rfloor K} \choose N}
\leq
|\aTD(N,K,\mu)|
&\leq
|\isoTD(N,K)|
= {{N+K} \choose N},
\end{aligned}
\end{equation}
we get by combining \eqref{equ:compa} and \eqref{equ:compb} in \eqref{equ:compexp} and letting $K\rightarrow\infty$ the upper bound
\begin{align*}
\lim_{K\to\infty} \epsilon
\leq \frac{1}{N}
\end{align*}
for the $\TP$ and $\TD$ index sets.
In most cases $N = \maxSV$, and hence the complexity of the neighbour matrix construction is approximately $\mathcal{O}\left(|\Lambda|^{1+\maxSV^{-1}}\right)$ for the $\TP$ and $\TD$ index sets.

Timing results for a diffusion coefficient of the form
\begin{align}
\label{equ:timinga}
a(\y,\x) = 1 + \left[1 + \sum_{m=1}^M a_m(\x) y_m\right]^p
\end{align}
and an $\isoTD(M,K)$ index set with different values of $M, p$, and $K$ are presented in Figure~\ref{fig:timing}.
The tables on the left of Figure~\ref{fig:timing} portray, from top to bottom, the estimated value of $\epsilon$ in the complexity estimate $\mathcal{O}(|W_\Xi| |\Lambda|^{1+\epsilon})$ together with the theoretical value $M^{-1}$,
the number of (sparse) matrix additions required for the computation of the summed matrices, i.e.,
$\sum_{\mu\in\Xi} |W(\mu)|$, and the maximum value of $K$ considered when estimating $\epsilon$ for different $M$ and $p$.
The image on the right of Figure~\ref{fig:timing} shows the time required to compute the neighbour matrices for $M=6$ and $p=2,4,6$ together with the line used to estimate $\epsilon$.
The plot markers correspond to different values of $K$, and
the timings were carried out with Intel$^\circledR$ Xeon$^\circledR$ Processor E3-1230 V2 (8M Cache, 3.30GHz) with 15.6 GiB of memory.

The obtained values of $\epsilon$ depicted in Figure~\ref{fig:timing} are in agreement with the theoretical limits. 
Increasing $p$ increases the $y$-intercepts of the fitted lines in the timing plot which is in agreement with the complexity estimate as increasing $p$ amounts to a larger $W_\Xi$.

\begin{figure}[h]
    \centering
\begin{subfigure}{0.35\textwidth}
    \centering
    \vspace{0pt}
    \rowcolors{1}{}{lightgray}
     \begin{tabular}{c|ccc|c}
       & $p=2$ & $p=4$ & $p=6$  & $1/M$\\
      \hline
      $M=2$ & .66  & .67  & .67&.50\\ 
      $M=4$ & .23  & .22  & .24&.25\\
      $M=6$ & .14  & .17  & .20&.17
    \end{tabular}
    \vspace{0.5cm}\\
    \rowcolors{1}{}{lightgray}
     \begin{tabular}{c|ccc}
& $p=2$ & $p=4$ & $p=6$ \\
      \hline
      $M=2$ & 9  & 38 & 110  \\
      $M=4$ & 25 & 255 & 1519\\
      $M=6$ & 49 & 924 & 9324\\      
    \end{tabular}
    \vspace{0.5cm}\\
    \rowcolors{1}{}{lightgray}
     \begin{tabular}{c|ccc}
       & $M=2$ & $M=4$ & $M=6$ \\
      \hline
      $K$& 800  & 60 & 20  \\
    \end{tabular}
\label{fig:horse}
\end{subfigure}
\hfill
\begin{subfigure}{0.6\textwidth}
\centering
\includegraphics{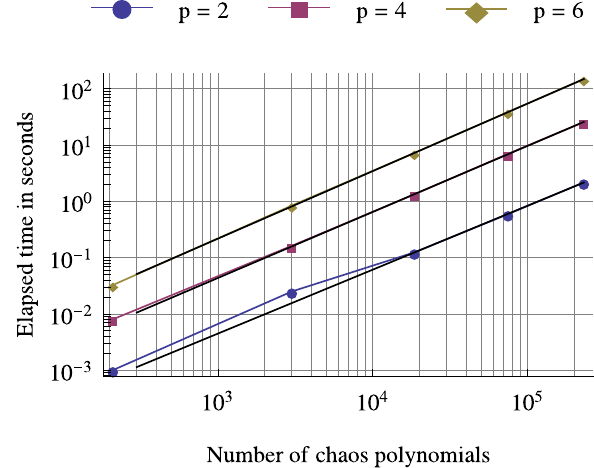}
\end{subfigure}
\caption{
Timing results for a diffusion coefficient of the form~\eqref{equ:timinga}
and an $\isoTD(M,K)$ index set with different values of $M, p$, and $K$.
Left: 
From top to bottom,
estimated value of $\epsilon$ in the complexity estimate $\mathcal{O}(|W_\Xi| |\Lambda|^{1+\epsilon})$ together with the theoretical value $M^{-1}$,
the number of (sparse) matrix additions required for the computation of the summed matrices, i.e.,
$\sum_{\mu\in\Xi} |W(\mu)|$, and the maximum value of $K$ considered when estimating $\epsilon$ for different $M$ and $p$.
Right: 
Time required to compute the neighbour matrices for $M=6$ and $p=2,4,6$ together with the line used to estimate $\epsilon$.
The plot markers correspond to different values of $K$.
}
\label{fig:timing}
\end{figure}

\subsection{Other index set types}

It is possible to consider other index sets, say $\Lambda$, than the $\TS$, $\TD$, and $\TP$ index sets using Algorithms~\ref{alg:multi1} and \ref{alg:Gmatrices} with (minor) modifications.
What is required is a method to check whether a given multi-index is an element of $\Lambda$ and some structure like 
the parenthood relations 
to locate it.
Currently, the tree structure provided by the parenthood relations requires that the index set $\Lambda$ can be constructed by adding an initial multi-index to $\Lambda$, after which all additions to $\Lambda$ are obtained by applying a one-increment to the last nonzero or higher index in a multi-index already in $\Lambda$.

An example of a relevant index set for which it is not straightforward to modify Algorithms~\ref{alg:multi1} and \ref{alg:Gmatrices} is the {\em total degree with factorial correction} $\TDFC$ index set considered by Beck et al.~\cite{Beck11}:
\begin{definition}[Total degree with factorial correction index set]
Let $N \in \N$,
$\mu\in\R^N$,
and $\varepsilon > 0$. 
The $\TDFC$ index set is given by
\begin{align}
\label{equ:TDFCcond}
\TDFC(N,\mu,\varepsilon) = \left\{\alpha\in (\mathbb{N}_{0}^\infty)_c~|~\sum_{n=1}^{N}\mu_n\alpha_n - \log{\frac{|\alpha|!}{\alpha!}} \leq \lceil-\log{\varepsilon} \rceil, \alpha_n = 0, n>N \right\},
\end{align}
where $\alpha! = \prod_{n \geq 1} \alpha_n!$.
\end{definition}

We cannot use the presented algorithms
directly  for the $\TDFC$ index set as it is possible that if $\alpha \notin \TDFC(N,\mu,\varepsilon)$ then $\alpha + \beta \in \TDFC(N,\mu,\varepsilon)$, where $\beta \in (\mathbb{N}_{0}^\infty)_c$; for example, the multi-index $(2,0,0,\ldots)$ does not belong to the index set $\TDFC(2,(1,1),e^{-1.95})$ but $(2,1,0,0,\ldots)$ does.

To conclude this section, let us make a short comment about the general case. Assume we are given an index set, say $\Lambda$, with no topological information, i.e., the actual multi-indices in $\Lambda$ have been generated, and we are asked to construct the neighbour matrices. 
Clearly Algorithm~\ref{alg:Gmatrices} (or some variant of it) cannot be used directly as we do not have the tree structure provided by the parenthood relations available (or some other similar data structure).
One option would be to construct a suitable tree structure, e.g., a KD-tree, and then use a modified version of Algorithm~\ref{alg:Gmatrices} to construct the neighbour matrices.
Constructing a KD-tree is a loglinear operation, and hence the complexity estimate for the construction of the neighbour matrices would not change in big O notation as $\mathcal{O}(|\Lambda| \log |\Lambda| + |\Lambda|^{1+\epsilon}) = \mathcal{O}(|\Lambda|^{1+\epsilon})$.
However, in practice it could be more efficient to first construct a hash table for $\Lambda$ using a suitable hash function and then use the hash table in Algorithm~\ref{alg:generalNfinding} to search elements in $\Lambda$.
Employing hash functions in the construction of the stochastic moment matrices is in itself interesting and clearly out of the scope of this work, and hence we leave these type of considerations for future work.

\section{Numerical experiments}
\label{sec:ne}

We consider solving the following one-dimensional elliptic model problem in the Legendre case:
find random field $u \in L_{\rho}^2(\Gamma, H_0^1(D))$ such that
\begin{align}
\label{equ:1Dmodelproblem}
\left\{
  \begin{array}{ll}
    -(a(\y,x) u(\y,x)')' = 1 &\textrm{ in } D=(-1,1),\\
    u(\y,-1) = u(\y,1) = 0
  \end{array}
\right.
\end{align}
holds for a given diffusion coefficient $a(\y,x)$ for which $a_{\textrm{min}}$ and $a_{\textrm{max}}$ as in condition \eqref{equ:assumptionona} exist. 

We consider 
the space-independent diffusion coefficient
\begin{align*}
a(\y,x) = a(\y) = 1 + \left[1 + \sum_{m=1}^M m^{-s}y_m\right]^p
\end{align*}
and
the space-dependent diffusion coefficient
\begin{align}
\label{equ:spacedepform}
a(\y,x) = 1 + \left[1 + \sum_{m=1}^M m^{-s} \sin(m \pi x)y_m\right]^p
\end{align}
with different values of $M$, $s$, and $p$ as summarized in Table~\ref{tab:params}.
We have fixed the overall form of the diffusion coefficient and only varied the number of used random variables and the power in the diffusion coefficient; we could have considered other types of diffusion coefficients, with similar conclusions.
For the spatial discretization, we employ 20 $p$-type elements of order four.

\begin{table}[b]
\rowcolors{1}{}{lightgray}
\begin{tabular}[]{rlccc}
&Diffusion coefficient type&$M$&$s$&$p$\\
\hline
Example 1 & space-independent & 2 & 3/2 & 6\\
Example 2 & space-dependent &4 & 3/2 & 2\\
Example 3 & space-dependent &6 & 3/2 & 4\\
\end{tabular}
\caption{
  Summary of the diffusion coefficient type and parameters $M$, $s$, and $p$ in Examples~1--3.
}
\label{tab:params}
\end{table}

In the first example, we use a space-independent diffusion coefficient for which we have the exact solution available,
and hence we can compare the performance of the various index sets against the exact result.
The second and third examples are space-dependent for which we do not have a simple analytical solution available, and thus we rely on a case-specific ``accurate enough'' sGFEM solution as the ``exact'' solution.

Even though the first example has a higher power in the diffusion coefficient than the two other examples,
the small number of random variables and space-independency make it considerably easier to solve than the second or third example.
Similarly, the second example is easier to solve than the third example as it has fewer random variables and a smaller power in the diffusion coefficient, which results in more sparse system matrices.
In summary, we expect to require larger index sets, i.e., more polynomials from the polynomial chaos, in the second and third examples than in the first one.

\subsection{Design of the numerical experiments}

We denote $H_0^1(D)$ with $V$ and equip it with the gradient norm $\|v\|_V = \| \nabla v \|_{L^2(D)}, \forall v\in V$, which induces the same topology as the standard $H$-norm due to the homogeneous Dirichlet condition.
The solution to \eqref{equ:1Dmodelproblem} can be represented in terms of the Legendre polynomials as~\cite{Schwab11a}
\begin{align}
\label{equ:uexpansion}
u(\y, x) = \sum_{\mu\in(\mathbb{N}_{0}^\infty)_c} u_{\mu}( x)L_{\mu}(\y)
\end{align}
in the topology of $L_\rho^2(\Gamma)$,
where the Legendre ``coefficient'' $u_{\mu} \in H_0^1(D)$ is given by
\[
u_{\mu}( x) = \int_\Gamma u(\y,  x) L_\mu(\y) \rho(\y) \dd \y.
\]
By considering only a finite number of terms from \eqref{equ:uexpansion}, we obtain an approximation for $u$
\[
u(\y,  x) \approx u_\Lambda(\y, x) = \sum_{\mu\in \Lambda} u_{\mu}( x)L_{\mu}(\y),
\]
where $\Lambda$ is a finite multi-index set.
The projection error is given by
\[
\| u - u_\Lambda \|_{L_\rho^2(\Gamma,V)}^2 = \sum_{\mu \notin \Lambda} \| u_\mu \|_V^2
\]
which follows from Parseval's equality and the completeness of $\{ L_\mu \}_{\mu \in (\mathbb{N}_{0}^\infty)_c}$ in $L_\rho^2(\Gamma)$.

Based on~\cite{Cohen11}, it is assumed
that for our numerical examples the following inequality for the Legendre coefficient norms holds for 
$s > 1$ and any $\epsilon > 0$ when $M$ approaches infinity:
\begin{align}
\label{equ:convrate}
\sqrt{\sum_{\mu \notin \Lambda} \| u_\mu \|_V^2} \leq C |\Lambda|^{1-s+\epsilon} \approx C |\Lambda|^{1-s},
\end{align}
where $C$ is a suitable constant.
Inequality~\eqref{equ:convrate} implies the (weaker) inequality for $\mu\notin\Lambda$
\begin{align}
\label{equ:convrate2}
\| u_\mu \|_V \leq C  |\Lambda|^{1-s+\epsilon}\approx C |\Lambda|^{1-s}.
\end{align}
We compare numerically observed Legendre coefficient convergence rates to~\eqref{equ:convrate2} in Section~\ref{sec:convergencerates}.

By solving \eqref{equ:reslinearsystem2} using a finite multi-index set $\Lambda$ (and a fine enough FEM grid), we obtain an sGFEM approximation for $u$
\[
u(\y,  x) \approx \tilde{u}_\Lambda(\y, x) = \sum_{\mu\in \Lambda} \tilde{u}_{\mu}( x)L_{\mu}(\y),
\]
where 
$\{ \tilde{u}_\mu \}_{\mu\in\Lambda}$ are approximations for the Legendre coefficients $\{ u_\mu \}_{\mu\in\Lambda}$.
We obtain the {\em best $M$-terms} approximation for $u$ by first computing the (approximative) Legendre coefficients of $u$ in a sufficiently large index set $\Lambda$, i.e., $\{ \tilde{u}_\mu \}_{\mu \in \Lambda}$, and then ordering the coefficients in a non-increasing order based on their $V$-norm and taking the first $M$-terms from the resulting ordered sequence. The corresponding $M$ multi-indices form the $\bestM$ 
index set.

We report the performance of 
the $\TP$, $\TD$, and $\bestM$ 
index sets 
by considering the convergence of the relative errors of mean and variance (percentage), i.e, 
\begin{align*}
\frac{\big\|\mathbb{E}(\tilde{u}_\Lambda) - \mathbb{E}(u)\big\|_{L^2(D)}}{\big\|\mathbb{E}(u)\big\|_{L^2(D)}}\times100 \quad \textrm{ and } \quad
\frac{\big\|\operatorname{Var}(\tilde{u}_\Lambda) - \operatorname{Var}(u)\big\|_{L^2(D)}}{\big\|\operatorname{Var}(u)\big\|_{L^2(D)}}\times100,
\end{align*}
when the number of multi-indices in the corresponding index set is increased.
Here, ``variance'' is to be understood as the diagonal of the corresponding covariance function and
the $L^2(D)$-norms are evaluated using built in numerical integration routines of Mathematica~\cite{Mathematica2012}.
We denote the case-specific sGFEM solution functioning as the ``reference'' exact solution with 
$u$ even though $\tilde{u}_\Lambda$ with a suitable index set $\Lambda$ would be more appropriate.

The weights for the dimensions used in the anisotropic index sets are computed numerically using 1D analyses as in \cite{Beck11,Beck12}.
For each random variable $1\leq m \leq M$, we consider the subset $\Lambda_m = \{ \mu \in \Lambda : \mu_i = 0 \textrm{ if } i \neq m, \mu_m = 0,1,2,\ldots \}$, where $\Lambda \subset (\mathbb{N}_{0}^\infty)_c$ is a sufficiently large index set.
We assume the decay of the Legendre coefficients corresponding to $\Lambda_m$ to be of the form $ \|u_\mu \|_V \sim\exp(-g_m \mu_m)$, 
and we obtain the rate $g_m$ by linear interpolation on the quantities $\log \|u_\mu \|_V$.

The numerical results for the experiments are obtained in the following way.
First, the solutions corresponding to the $\isoTP(M,K)$ and $\isoTD(M,K)$ index sets, i.e., the $\isoTP(M,K)$ and $\isoTD(M,K)$ solutions, are computed 
by increasing $K$ from zero to some suitable maximum value.
Then the weight vector $g$ is approximated from one of the computed sGFEM solutions as explained above.
Once the weights are available, the 
anisotropic solutions are computed up to a suitably large index set.
If the exact solution is not available, one of the computed sGFEM solutions is selected as the reference solution, 
and the relative errors in mean and variance are computed either against the exact or the reference solution.
Finally, the best $M$-terms sGFEM solutions are computed as explained above by using the sGFEM solution corresponding to the largest used index set of the best performing index set type.

In addition to the convergence plots of the relative errors in mean and variance,
it turns out useful to visualize the norms of the Legendre coefficients of an sGFEM solution in descending order.
From this kind of graph, it is possible to see how much excess multi-indices, i.e, multi-indices whose corresponding Legendre coefficients have small norms, are included in the index set. 
The ``better'' the index set type, the less excess multi-indices are included in the index set when the size of the index set is increased, i.e., the ``tail'' of the index set stays small.

In the first example, where we consider only two random variables, we also illustrate the largest considered index sets to give better understanding of the different index set types. Moreover, by plotting the norms of the Legendre coefficients as a function of the corresponding multi-indices, we are able to visualize why certain index sets are performing better than the rest of the index sets in that specific case.

\subsection{Space-independent case: Example 1}
\label{sec:n1}
As the first example, we consider a space-independent diffusion coefficient with two random variables
\begin{align}
\label{equ:ex1a}
a(\y, x) = a(\y) = 1 + \left[1 + \sum_{m=1}^2 m^{-3/2}y_m\right]^6.
\end{align}
In this case, the exact solution to \eqref{equ:1Dmodelproblem} is given by 
\begin{align*}
u(\y, x) = \frac{1- x^2}{2a(\y)}
\end{align*}
and the main results are shown in Figures~\ref{fig:example1} and \ref{fig:example1_legendrecoefs}.

Figure~\ref{fig:example1} illustrates the convergence of the sGFEM solutions
toward the exact mean (left) and variance (right) for the first experiment as the number of chaos polynomials, i.e., the number of multi-indices in the index set, is increased. 
From the figure, we see that the best results are obtained, as expected, with the $\bestM$ index set followed by the $\aTP$, $\aTD$, $\isoTD$, and $\isoTP$ index sets.
The anisotropic index sets 
are constructed using the weight vector
$g \approx (1.16, 3.82)$, which is approximated from the $\isoTD(2,19)$ solution,
and the $\bestM$ index set is constructed based on the largest $\aTP$ solution.

From Figure~\ref{fig:example1}, we see that the relative error in mean (variance) when using approximately 200 chaos polynomials is roughly $10^{-4}\%\,(10^{-3}\%)$ in the $\isoTP$ case, $10^{-7}\%\,(10^{-6}\%)$ in the $\aTP$ case, and $10^{-8}\%\,(10^{-7}\%)$ in the best $M$-terms case, and hence using different index set types may give dramatically better results.
The $\aTP$ index set solutions follow first quite nicely the $\bestM$ index set solutions until the relative error between the index sets is about one decade at approximately 200 multi-indices, indicating that the $\aTP$ index set is close to being optimal at least when the size of the index set is not too large. Recall, however, that the $\bestM$ index set is constructed based on the $\aTP$ solution with the largest index set.

The solutions corresponding to the other index sets than to the $\isoTD$ index set converge relatively smoothly both in mean and variance whereas 
the solution corresponding to the $\isoTD$ index set is oscillating when considering the variance and stalling at every other increment when considering the mean.
This reminds us that 
employing more computational resources to increase the size of the index set only slightly does not necessarily give better results as the solution may be oscillating even though it converges asymptotically.

\begin{figure}
  \includegraphics{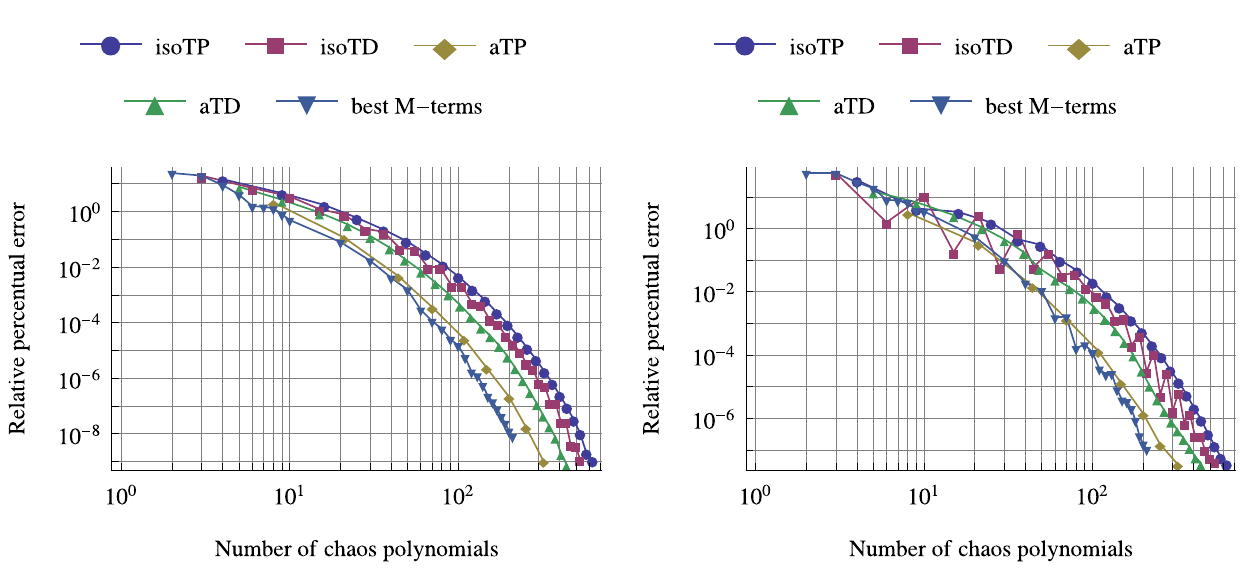}
  \caption{
    Convergence of the $\TP$, $\TD$, and $\bestM$ index set solutions toward the exact mean (left) and variance (right) for the first experiment as the dimension of the polynomial space increases.
    The diffusion coefficient is given by \eqref{equ:ex1a}
    and 
    the anisotropic index sets 
    are constructed using the weight vector
    $g \approx (1.16, 3.82)$ which is approximated from the $\isoTD(2,19)$ solution.
   }
  \label{fig:example1}
\end{figure}

Figure~\ref{fig:example1_indexsets} shows the logarithm of the Legendre coefficient norms for the largest used $\isoTP$ index set, i.e., $\isoTP(2,24)$, on the left and the largest used index sets on the right for the first example.
Notice that the range in the left image is different from the images on the right and that the left image and the top left image on the right correspond to each other; the latter shows the index set and the former portrays the log-norms of the corresponding Legendre coefficients.
In a two-dimensional case,
the isotropic/anisotropic tensor product and total degree index sets can be visualized as squares/rectangles and isosceles right triangles/right triangles, respectively.
By looking at the log-norms shown in Figure~\ref{fig:example1_indexsets}, we see that 
expanding the index set in the direction of the horizontal axis is more profitable than in the direction of the vertical axis.
This agrees with the results in Figure~\ref{fig:example1}:
the 
anisotropic
index sets are performing better than the 
isotropic
index sets.

\begin{figure}
  \includegraphics{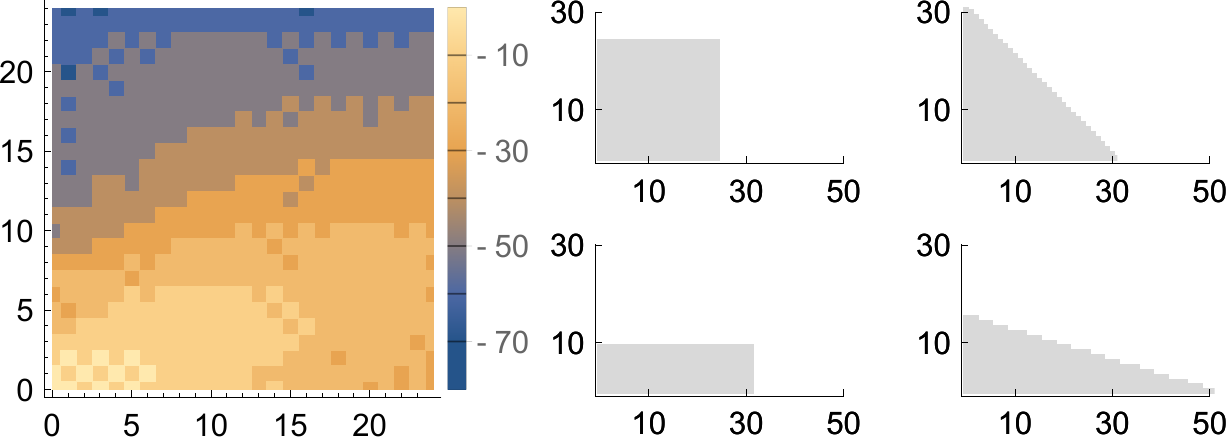}
  \caption{
    The logarithm of the Legendre coefficient norms for the $\isoTP(2,24)$ solution for the first example (left) and the largest used $\TP$ and $\TD$ index sets (right).
    The multi-indices in the index sets of the first example can be given as $\alpha = (\alpha_1,\alpha_2,0,0,\ldots)$, where $\alpha_1, \alpha_2 \in \N_0$.
    Here, the horizontal axis corresponds to $\alpha_1$ and the vertical axis corresponds to $\alpha_2$.
    Notice that the range in the left image is different from those in the images on the right and that the left image and the top left image on the right correspond to each other.
   }
  \label{fig:example1_indexsets}
\end{figure}

Figure~\ref{fig:example1_legendrecoefs} illustrates 
the evolution of the ordered Legendre coefficient norms for the index sets considered in the first experiment as the dimension of the polynomial space increases. 
It seems that the best performing index sets have the smallest tails and 
that the tails grow slower for the anisotropic index sets than for the isotropic index sets, giving a new confirmation for the results in Figure~\ref{fig:example1}. Notice that the largest index sets have almost equal relative errors as depicted by Figure~\ref{fig:example1}, and hence comparing the largest index sets 
is fair.
If we want to decrease the size of an index set for which we have already computed the sGFEM solution, the ordered log-norm plot can be used to select a suitable threshold level for the multi-indices in the smaller index set. 
Finally, we have included an approximated convergence line with rate $r = 2.27$ in the log--log plots of Figure~\ref{fig:example1_legendrecoefs} to allow a comparison between the Legendre coefficients in the different examples;
the larger the rate, the better approximations we are able to obtain with small index sets.
The rate is obtained from a least-squares fit to the 100 largest Legendre coefficients in the $\aTP$ case with the largest index set and it is computed in a similar way for the subsequent examples.
Moreover, see Section~\ref{sec:convergencerates} for a comparison of the observed Legendre coefficient convergence rates to the theoretical results.

\begin{figure}
  \includegraphics{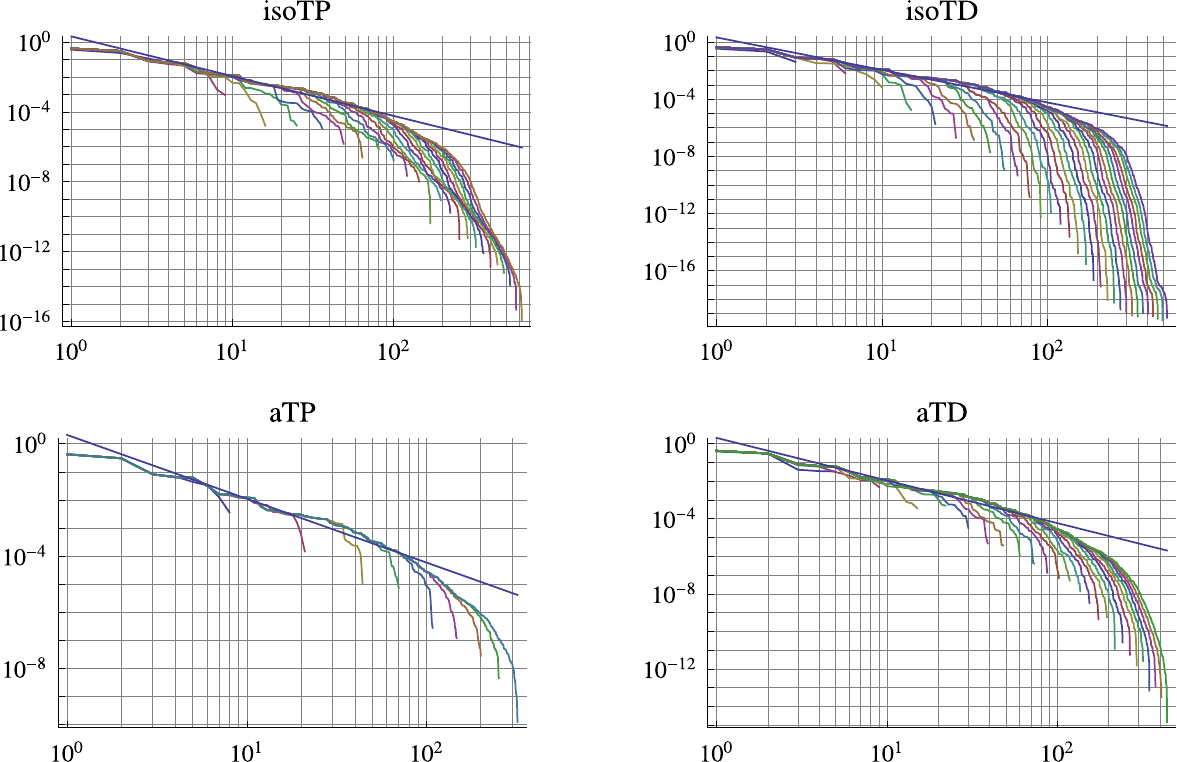}
  \caption{ 
    Evolution of the ordered Legendre coefficient norms (vertical axis) for the index sets considered in the first experiment as the dimension of the polynomial space increases (horizontal axis).
    We have also included an approximated convergence line with rate $r=2.27$ in the images.
   }
  \label{fig:example1_legendrecoefs}
\end{figure}

\subsection{Space-dependent case: Example 2}
\label{sec:n2}
In the second example, we use a space-dependent diffusion coefficient with four random variables
\begin{align}
\label{equ:ex2a}
a(\y, x) = 1 + \left[1 + \sum_{m=1}^4 m^{-3/2}\sin(m\pi x)y_m\right]^2,
\end{align}
and we consider the sGFEM solution corresponding to the $\aTD$ index set with 4078 chaos polynomials as the reference or ``exact'' solution as no simple form for the exact solution is available.

Figures~\ref{fig:example2} and \ref{fig:example2_legendrecoefs}, which are organized in the same way as Figures~\ref{fig:example1} and \ref{fig:example1_legendrecoefs}, respectively, for the first experiment, illustrate the convergence of the mean and variance of the sGFEM solutions toward the reference solution and the evolution of the ordered Legendre coefficients for the different types of index sets for the second example.
Here, the weight vector $g \approx (2.40, 4.17, 5.37, 6.38)$ is approximated from the $\isoTD(4,17)$ solution, and the best $M$-terms index set is constructed based on the largest $\aTD$ solution.

From Figure~\ref{fig:example2}, we see that the relative error in mean (variance) when using approximately $2 \cdot 10^3$ chaos polynomials is roughly 
$10^{-6}\%\, (10^{-4}\%)$ in the $\isoTP$ case, 
$10^{-9}\%\, (10^{-7}\%)$ in the $\isoTD$ and $\aTP$ cases, 
$10^{-12}\%\, (10^{-10}\%)$ in the $\aTD$ case,
and $10^{-13}\%\, (10^{-11}\%)$ in the 
best $M$-terms case.
As in the first example, the different index sets show dramatic differences in performance.
Even though the $\aTP$ index set gave the best performance in the first example,
the performance of the $\aTP$ index set is only on par with the generic $\isoTD$ index set in the second example.
Finally, the $\isoTP$ index set performs poorly when compared with the other index sets.

\begin{figure}
  \includegraphics{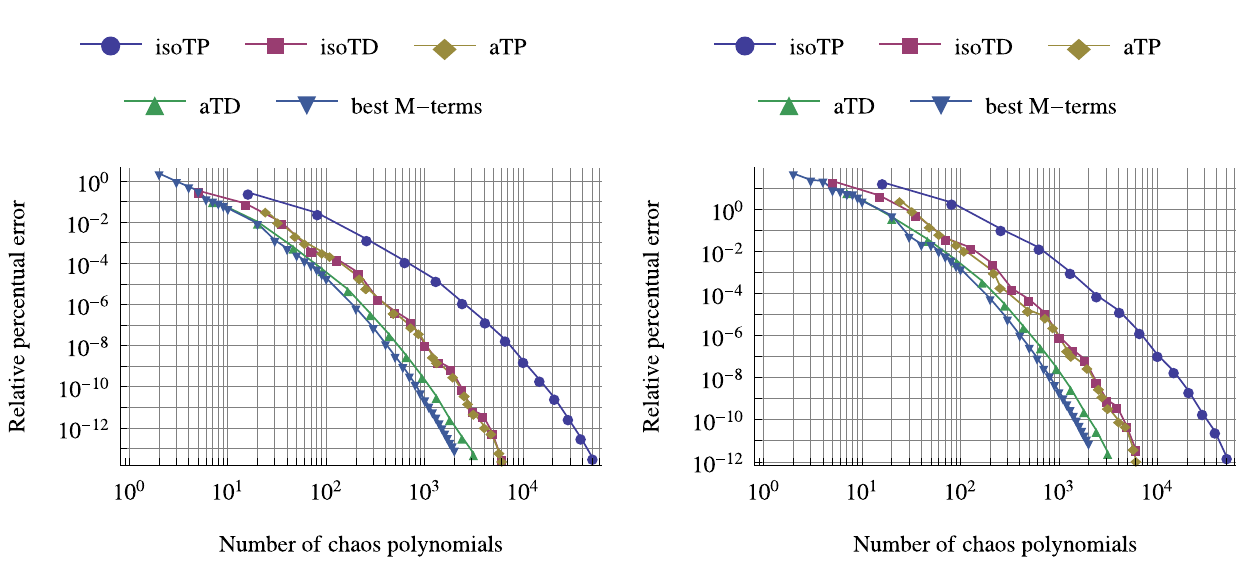}
  \caption{
    Convergence of the $\TP$, $\TD$, and $\bestM$ index set solutions toward the mean (left) and variance (right) corresponding to the $\aTD$ solution with 4078 chaos polynomials for the second experiment as the dimension of the corresponding polynomial space increases.
    The diffusion coefficient is given by \eqref{equ:ex2a}
    and
    the anisotropic index sets
are constructed using the weight vector
    $g \approx (2.40, 4.17, 5.37, 6.38)$ which is approximated from the $\isoTD(4,17)$ solution.
  }
  \label{fig:example2}
\end{figure}

As in the first example, the best performing index sets have the slowest growing tails as illustrated by Figure~\ref{fig:example2_legendrecoefs}.
This suggests the following heuristic method for selecting the index set to use in a practical application. 
First, use different index set types and compute the sGFEM solutions for these index sets using relatively small number of multi-indices.
Finally, choose the index set which looks the most promising based on the evolution of the ordered Legendre coefficient norms.
However, this heuristic method is only feasible if the required sGFEM solutions can be computed in a reasonable time.

The obtained convergence rate for the second example, $r=2.03$, agrees with the convergence results of Figures~\ref{fig:example1} and \ref{fig:example2};
as the rate in the second example is lower than in the first example, we expect that larger index sets are required for a satisfactory converge of the solutions in the second example.
See Section~\ref{sec:convergencerates} for the comparison of the observed Legendre coefficient convergence rates to the theoretical results.

\begin{figure}[h]
  \includegraphics{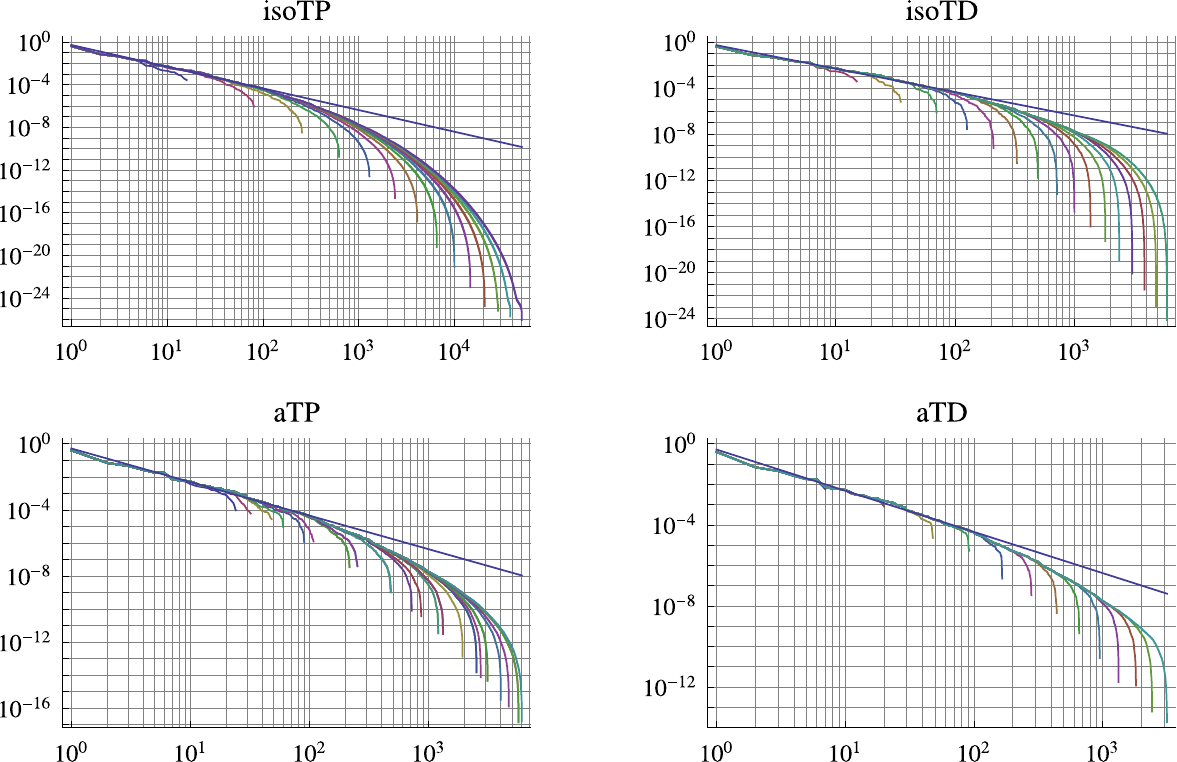}
  \caption{ 
    Evolution of the ordered Legendre coefficient norms (vertical axis) for the index sets considered in the second experiment as the dimension of the polynomial space increases (horizontal axis).
    We have also included an approximated convergence line with rate $r=2.03$ in the images.
   }
  \label{fig:example2_legendrecoefs}
\end{figure}

\subsection{Space-dependent case: Example 3}
\label{sec:n3}
As the third and final example, we use a space-dependent diffusion coefficient with six random variables
\begin{align}
\label{equ:ex3a}
a(\y, x) = 1 + \left[1 + \sum_{m=1}^6 m^{-3/2}\sin(m\pi x)y_m\right]^4
\end{align}
and we consider the sGFEM solution corresponding to the $\aTD$ index set with 5909 chaos polynomials as the reference or ``exact'' solution as no simple form for the exact solution is available.
The weight vector $g \approx (1.52, 3.56, 4.95, 5.76, 6.4, 6.85)$, used in the anisotropic solutions, is approximated from the $\isoTD(6,9)$ solution.

Figure~\ref{fig:example3} depicts the convergence of the relative errors for mean and variance in a similar way as in the previous two examples.
It seems that the $\aTD$ index set clearly outperforms the $\aTP$, $\isoTD$, and $\isoTP$ index sets,
the $\aTP$ index set exhibits slightly better performance than the $\isoTD$ index set, 
and the $\isoTP$ index set, as in the previous two examples, gives the worst performance.
In this example, we were limited by our computing resources to around $5 \cdot 10^3$ multi-indices due to more full system matrices caused by the higher power and number of random variables in the diffusion coefficient than in the second example.
Hence, the largest index sets do not correspond to similar errors but the cardinalities of the largest index sets are about the same.

\begin{figure}
  \includegraphics{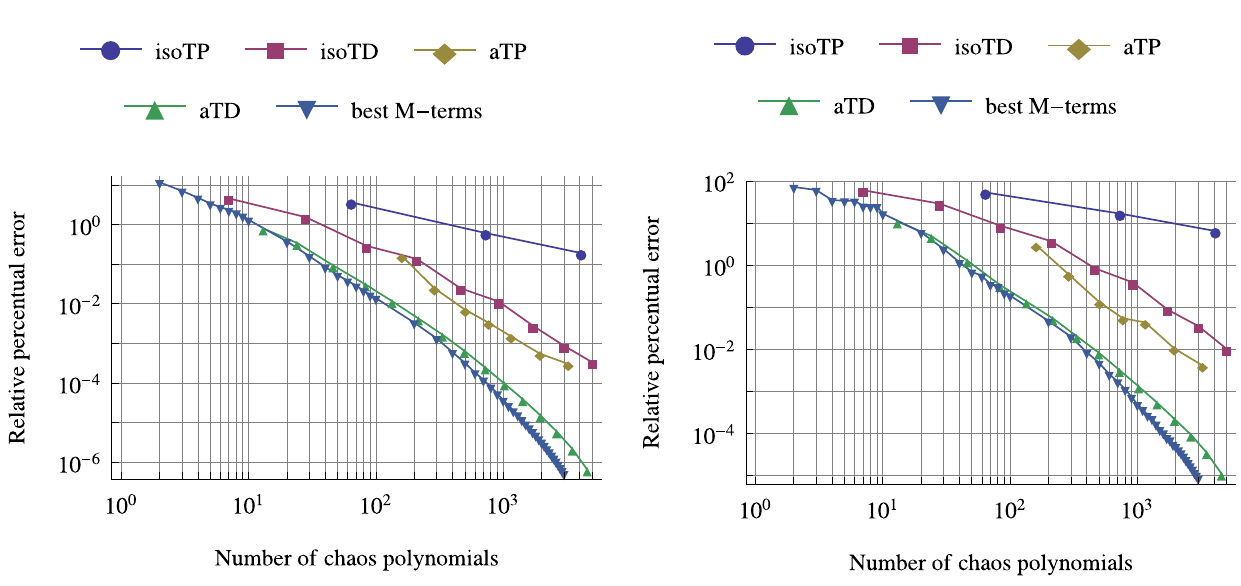}
  \caption{
    Convergence of the $\TP$, $\TD$, and $\bestM$ index set solutions toward the mean (left) and variance (right) corresponding to the $\aTD$ solution with 5909 chaos polynomials for the third experiment as the dimension of the corresponding polynomial space increases.
    The diffusion coefficient is given by \eqref{equ:ex3a}
    and
    the anisotropic index sets are constructed using the weight vector
    $g \approx (1.52, 3.56, 4.95, 5.76, 6.4, 6.85)$ which is approximated from the $\isoTD(6,9)$ solution.
  }
  \label{fig:example3}
\end{figure}

Figure~\ref{fig:example3_legendrecoefs}, which shows the ordered Legendre coefficient norms for the third example, agrees with the previous two examples: the slower the tail is growing for an index set type, the better it is performing.
As we have increased both the number of random variables and the power in the diffusion coefficient, 
it is natural that the approximated convergence line in the log--log plots of Figure~\ref{fig:example3_legendrecoefs} has a smaller rate, $r=1.43$, than in the previous two examples: more chaos polynomials are required to take into account the increased number of random variables and cross terms in the diffusion coefficient. 
According to Figure~\ref{fig:example3_legendrecoefs}, 
the $\isoTP$ index set and the $\isoTD$ and $\aTP$ index sets start to perform worse than the $\aTD$ index set 
at around $10^2$ and $5\cdot 10^2$ multi-indices, respectively, which is in line with the results in Figure~\ref{fig:example3}.

\begin{figure}
  \includegraphics{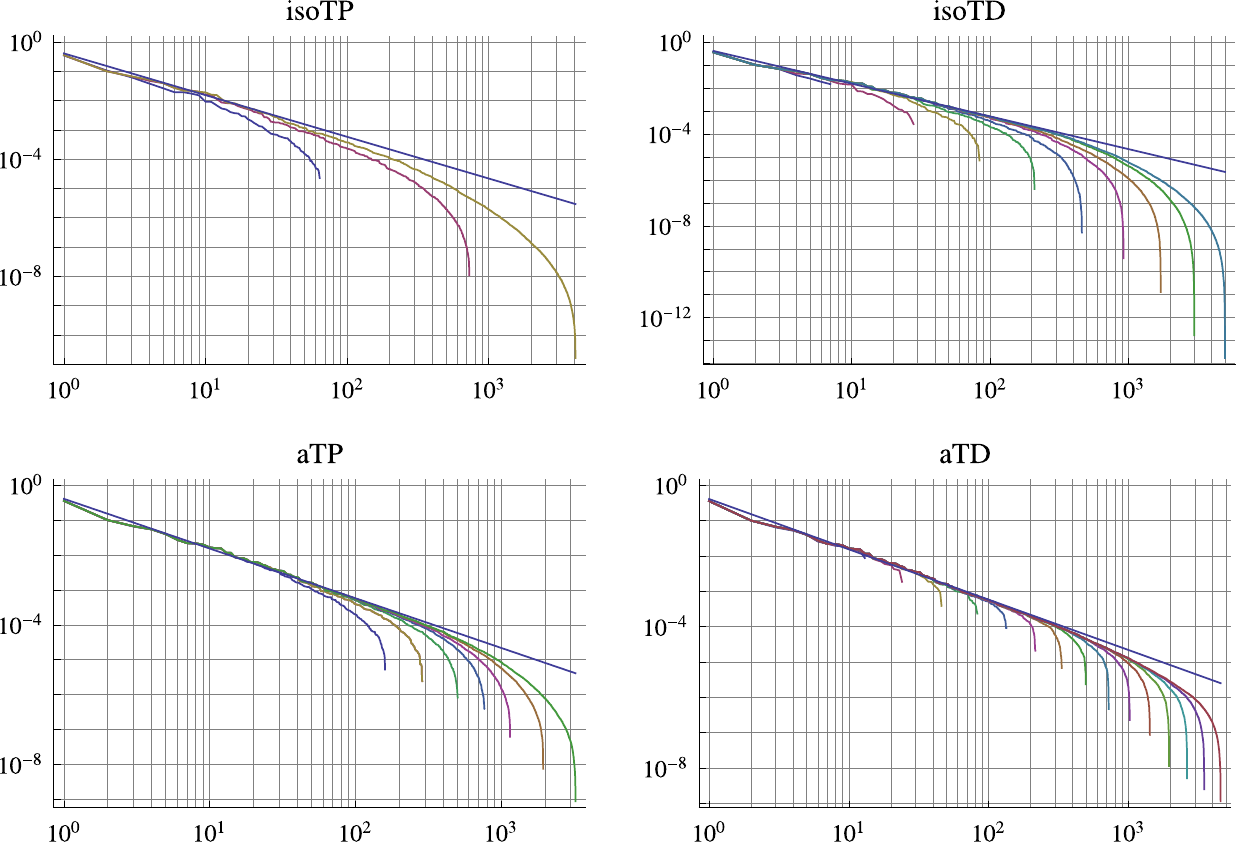}
  \caption{
    Evolution of the ordered Legendre coefficient norms (vertical axis) for the index sets considered in the third experiment as the dimension of the polynomial space increases (horizontal axis).
    We have also included an approximated convergence line with rate $r=1.43$ in the images.
   }
  \label{fig:example3_legendrecoefs}
\end{figure}

\subsection{Legendre coefficient convergence rates}
\label{sec:convergencerates}

To conclude the numerical experiments, we compare the observed Legendre coefficient convergence rates to theoretical results, i.e., 
we examine how close to the theoretical rate $r_s=s-1$ from~\eqref{equ:convrate2} the Legendre coefficients converge when $M$ is large enough;
recall that in our previous three experiments $s=3/2$.
We have listed approximated convergence rates for space-dependent diffusion coefficients of the form~\eqref{equ:spacedepform} with different values of $M$, $s$, and $p$ in Table~\ref{tab:gradetable}.
Considerably larger values of $M$ than shown on Table~\ref{tab:gradetable} could have been considered for the case $p=1$.
However, as this would not have been feasible for $p=2,3$, we restricted $M$ below 9.
The rate for particular values of $M$, $s$, and $p$ in Table~\ref{tab:gradetable} was obtained by 
first computing the solution using the $\isoTD(M,r)$ index set, where $r$ is the smallest value for which $|\isoTD(M,r)| > 3 \cdot 10^3$, after which the rate is computed as in the previous examples using Legendre coefficients from the 10th largest up to the 100th largest. The largest Legendre coefficients were excluded for stability reasons.

The values in Table~\ref{tab:gradetable} agree relatively well with the related theory:
the rates corresponding to $s=6,8$ with $M=8$ and $p=1,2,3$ are close to the expected values of $5$ and $7$, respectively.
For the cases $s=2,4$, the obtained rates of approximately $r_2=2$ and $r_4=3.5$ are slightly better than the expected values of $1$ and $3$ predicted by~\cite{Cohen11}.
However, by increasing $M$ the rates keep approaching the predicted values: for example, $M=50$ gives rates $r_2=1.66$ and $r_4=3.26$ (not shown), respectively, and hence we expect that by further increasing $M$ the rates would eventually approach $1$ and $3$, respectively.
In summary, the larger $s$ is, the faster we expect the converge rate to approach $s-1$ when $M$ is increased.

According to Table~\ref{tab:gradetable}, the power $p$ does not essentially affect the rate.
This is expected: if we expanded the diffusion coefficient, the terms resulting from increasing the value of $p$ have smaller overall effect on the diffusion coefficient.
However, as the estimated rate depends on the actual Legendre coefficients used in the computation of the rate,
we have plotted the ordered Legendre coefficients corresponding to the last row of Table~\ref{tab:gradetable}, i.e., $M=8$ to confirm the results: 
Figure~\ref{fig:L2rates} shows the corresponding four sets ($s=2,4,6,8$) of almost overlapping curves ($p=1,2,3$).
As the ordered Legendre coefficients have the same profile for each fixed value of $s$, we conclude that the power $p$ does not (essentially) affect the asymptotic rate.

By ignoring the power $p$ and the type of the diffusion coefficient, we know from Sections~\ref{sec:n1}--\ref{sec:n3} that the values 
$M=2,4,6$ with $s=3/2$ correspond to the rates $r_{3/2}=2.27, 2.03$, and $1.43$, respectively.
These numbers are in line with the results presented in Table~\ref{tab:gradetable} for the case $s=2$.
By increasing $M$ the rate keeps decreasing: for example, $M=50$ gives rate $r_{3/2}=1.29$ (not shown), and hence we expect that by further increasing $M$ the rate would eventually approach $1/2$.

\begin{table}
\rowcolors{1}{}{lightgray}
\begin{tabular}[]{ccccc|ccccc|ccccc}
\multicolumn{5}{c}{$p=1$}&
\multicolumn{5}{c}{$p=2$}&
\multicolumn{5}{c}{$p=3$}\\
$M$&$s$=2&4&6&8&$M$&$s$=2&4&6&8&$M$&$s$=2&4&6&8\\
\hline
2&7.2&9.3&11.0&12.5&
2&6.1&8.0&9.6&11.0&
2&5.1&6.8&8.4&9.6\\
4&2.9&4.5&5.8&7.2&
4&2.6&4.0&5.4&6.8&
4&2.3&3.7&5.1&6.6\\
6&2.2&3.7&5.2&6.9&
6&2.1&3.5&5.1&6.8&
6&1.9&3.4&5.1&6.8\\
8&2.0&3.5&5.1&6.8&
8&1.9&3.5&5.1&6.8&
8&1.9&3.4&5.0&6.7
\end{tabular}
\caption{
Legendre coefficient convergence rates for the space-dependent diffusion coefficient~\eqref{equ:spacedepform} with different values of $M$, $s$, and $p$.
With $M=8$ and $p=1,2,3$ the expanded form of the diffusion coefficient has 9, 45, and 165 terms, respectively.
}
\label{tab:gradetable}
\end{table}

\begin{figure}
  \includegraphics{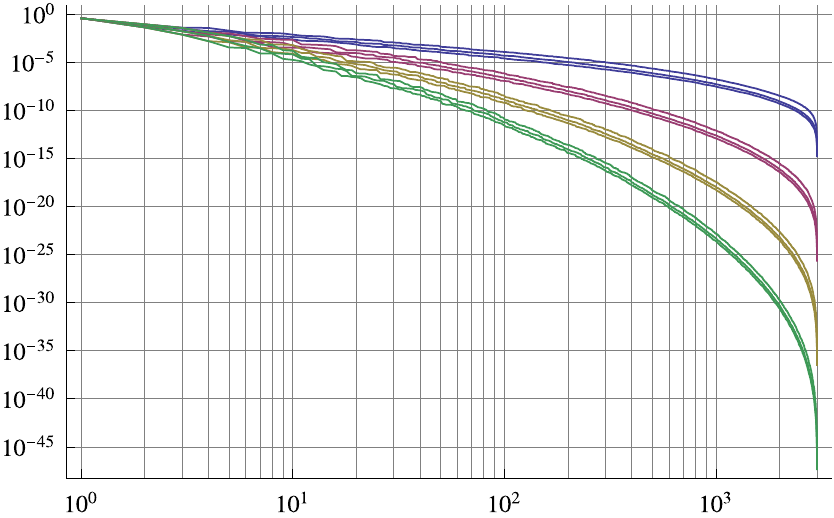}
  \caption{
    The ordered Legendre coefficient norms (vertical axis) corresponding to the last row of Table~\ref{tab:gradetable}, i.e., $M=8$.
    The four sets of three almost overlapping lines correspond to the four and three different values of $s$ and $p$, respectively, in Table~\ref{tab:gradetable}.
   }
  \label{fig:L2rates}
\end{figure}

\section{Discussion}
\label{sec:co}

In this paper, we have presented algorithms for constructing stochastic moment matrices of the form \eqref{equ:SMM} in an efficient manner for certain index set types allowing the modeler to consider non-affine diffusion coefficients of the form \eqref{equ:nonaffinekl}.

Algorithms~\ref{alg:multi1} and \ref{alg:Gmatrices} can be used (with possible slight modifications) to the efficient construction of hierarchical index sets together with the neighbour matrices required in the stochastic moment matrix generation.
The algorithms can be used with index sets which can be constructed by adding an initial multi-index to the index set, after which all additions to the index set are obtained by applying a certain kind of one-increment to a multi-index already in the index set. 
Most common index sets can be constructed in this way, with the exception of, for example, the factorial corrected total degree ($\TDFC$) index set.
We leave the generalization of Algorithms~\ref{alg:multi1} and \ref{alg:Gmatrices} to index set types where any one-increment is allowed for future studies.

In the numerical examples, we reported the performance of various index set types for 
one space-independent and two space-dependent 
diffusion coefficients in a one-dimensional spatial setting.
Moreover, we compared the observed Legendre coefficient convergence rates to theoretical results:
the observed Legendre coefficient rates agreed well with the theoretical results in~\cite{Cohen11}.

Using the isotropic total degree index set to compute the dimension weights for the anisotropic index sets as in \cite{Beck11,Beck12} worked well; the anisotropic index sets outperformed the isotropic index sets in all considered examples.
In the first experiment, the anisotropic tensor product index set gave the best results, whereas in the second and third experiment the anisotropic total degree index set performed the best.
Hence, it might be difficult to predict a priori which index set type would be the best choice without a possibly extensive analysis of the problem in question. 
According to our numerical experiments, a promising heuristic method to select the index set type is to first compute sGFEM solutions for various index set types and then select the one with the slowest growing ``tail'' of Legendre coefficient norms.
Other possibility is to 
compare the sGFEM solutions, in some suitable sense, with a set of, for example, FEM solutions corresponding to different realizations of the diffusion coefficient and then select the best performing index set type. 
The (numerical) consideration of these ideas is left for future studies.

\appendix
\section{Legendre and Hermite polynomials}
\label{sec:orthopolys}

When constructing the stochastic system matrices for \eqref{equ:reslinearsystem1} or \eqref{equ:reslinearsystem2} in the setting of generalized polynomial chaos (gPC) expansions, see, e.g., \cite{Xiu10}, we often encounter products of orthogonal hypergeometric polynomials. 
It turns out useful to linearize these products using a suitable polynomial family, i.e., solve the (generalized) linearization problem for the polynomial families involved.
Moreover, it is also useful to solve the inversion problem for each of the polynomial families, i.e., express $y^k$, where $k$ is a nonnegative integer, using the polynomial family in question; see \cite{Artes98} and the references therein for more information about linearizing expressions containing (hypergeometric) polynomials.

In this paper, we consider only the Legendre and Hermite polynomial families as those are typical choices in the sGFEM setting.
Hence, we consider the linearization and inversion problem only for these (hypergeometric) orthogonal polynomial families. However, similar results to the ones shown below can be obtained for other (hypergeometric) orthogonal polynomial families~\cite{Artes98}. 

In the next two sections, we recall the linearization, triple integral, and inversion formulas for the Legendre and Hermite polynomials.
We note that even though the presented results are classical, the actual formulas are rarely seen in the sGFEM literature.

\subsection{Legendre polynomials}

The orthonormal univariate Legendre polynomials are given explicitly by the Rodrigues' formula:
\begin{definition}[Legendre polynomials]
Let $m\in\N_0$.
The $m$th orthonormal univariate Legendre polynomial is given by
\[
L_m(y) = \frac{\sqrt{2m+1}}{2^m\,m!}\frac{\dd^m}{\dd y^m}[(y^2-1)^m]
\]
with the orthonormality relation
\[
\int_{-1}^1 L_k(y) L_l(y) \frac{1}{2} \dd y  = \delta_{kl},
\]
where $\delta_{kl}$ is the Kronecker's delta.
\end{definition}

\begin{theorem}[Linearization formula for the Legendre polynomials]
\label{thm:llp}
The product of two univariate Legendre polynomials $L_k(y)$ and $L_l(y)$ can be expanded as a sum of Legendre polynomials:
\begin{align}
\begin{split}
\label{equ:productofL1}
L_k(y)L_l(y) = \sum_{m=|k-l|}^{k+l}\ind{2g~\textrm{is even}}&\frac{A(g-k)A(g-l)A(g-m)}{A(g)}\\
&\times\frac{\sqrt{(2k+1)(2l+1)(2m+1)}}{2g+1}L_m(y),
\end{split}
\end{align}
where $g = (k+l+m)/2$ and $A(n) = {2n \choose n}/2^n$.
\end{theorem}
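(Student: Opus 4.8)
The plan is to reduce the identity to the evaluation of a single symmetric triple integral. Since $\{L_m\}_{m\ge 0}$ is orthonormal with respect to the weight $\tfrac12$ on $[-1,1]$ and $L_k(y)L_l(y)$ is a polynomial of degree $k+l$, it has a unique finite expansion $L_kL_l=\sum_m c_{klm}L_m$ with
\[
c_{klm}=\frac12\int_{-1}^1 L_k(y)L_l(y)L_m(y)\,\dd y,
\]
so the whole statement amounts to computing $c_{klm}$ and identifying the range of $m$ over which it is nonzero; note that $c_{klm}$ is precisely the triple integral, so this computation simultaneously yields the companion triple-integral formula. First I would pin down that range. The degree bound forces $m\le k+l$; assuming without loss of generality $k\ge l$, if $m<k-l$ then $L_lL_m$ has degree $l+m<k$ and, as $L_k$ is orthogonal to every polynomial of lower degree, the integral vanishes, giving $m\ge|k-l|$. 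Finally $L_kL_l$ has parity $(-1)^{k+l}$, so $c_{klm}=0$ unless $k+l+m$ is even, which is exactly the condition carried by $\ind{2g~\textrm{is even}}$ with $g=(k+l+m)/2$.

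Next I would evaluate $c_{klm}$ on admissible triples. My primary route is to identify the non-normalized Legendre polynomials $P_n=L_n/\sqrt{2n+1}$ with the zonal spherical harmonics and invoke the classical Gaunt evaluation
\[
\frac12\int_{-1}^1 P_k(y)P_l(y)P_m(y)\,\dd y=\wigner{k}{l}{m}^2,
\]
which already encodes the triangle and parity selection rules found above. Restoring the normalization then gives
\[
c_{klm}=\sqrt{(2k+1)(2l+1)(2m+1)}\,\wigner{k}{l}{m}^2.
\]
Should one prefer a derivation that avoids citing the $3j$-symbol, the same value can be produced self-containedly by expressing one factor through the Rodrigues formula and integrating by parts, or by multiplying three copies of the generating function $(1-2yt+t^2)^{-1/2}$ and extracting coefficients.

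The final step is purely algebraic. I would substitute the explicit closed form of the all-zero-projection Wigner symbol,
\[
\wigner{k}{l}{m}=(-1)^{g}\frac{g!}{(g-k)!(g-l)!(g-m)!}\sqrt{\frac{(2g-2k)!(2g-2l)!(2g-2m)!}{(2g+1)!}},
\]
square it, and rewrite everything through $A(n)=\binom{2n}{n}/2^{n}=(2n)!/((n!)^2 2^{n})$. The key simplification is that the powers of two collect as $2^{(g-k)+(g-l)+(g-m)}=2^{\,3g-(k+l+m)}=2^{g}$, cancelling the $2^{g}$ hidden in $A(g)$, while the factor $2g+1$ emerges from $(2g+1)!=(2g+1)(2g)!$. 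This confirms
\[
\wigner{k}{l}{m}^2=\frac{A(g-k)\,A(g-l)\,A(g-m)}{(2g+1)\,A(g)},
\]
and inserting it into $c_{klm}$ reproduces exactly the stated coefficient.

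The hard part will be the closed-form evaluation of the triple integral: that Gaunt/$3j$-symbol value is the single genuinely nontrivial (and classical) ingredient, and the correctness of the whole formula rests on it. The selection-rule analysis and the factorial-to-$A(n)$ bookkeeping are then routine, the only delicate point being to track the powers of two so that the $2^{g}$ factors cancel and the residual $2g+1$ lands in the right denominator.
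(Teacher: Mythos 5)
Your proposal is correct, but it takes a different (in fact, reversed) route from the paper. The paper's entire proof of Theorem~\ref{thm:llp} is a citation to Adams' classical 1878 linearization formula, and it then uses that formula to \emph{derive} the triple product integral (Theorem~\ref{thm:tintforLegendre}) and the 3jm-symbol identification afterwards. You instead start from the triple integral: orthonormality reduces the linearization coefficient to $c_{klm}=\tfrac12\int_{-1}^1 L_kL_lL_m\,\dd y$, the selection rules $|k-l|\le m\le k+l$ and $k+l+m$ even follow from degree, orthogonality, and parity arguments, and the value is supplied by the Gaunt evaluation $\tfrac12\int_{-1}^1 P_kP_lP_m\,\dd y=\wignersmall{k}{l}{m}^2$; the concluding algebra converting the squared 3jm-symbol to the $A(n)$ form is correct (the exponent count $(g-k)+(g-l)+(g-m)=g$ does make the powers of two cancel, and $(2g+1)!=(2g+1)(2g)!$ produces the residual $2g+1$). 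This buys you a simultaneous proof of Theorem~\ref{thm:tintforLegendre} and makes the selection rules transparent rather than inherited from the cited formula, at the cost of shifting the classical citation from Adams to Gaunt/Wigner; in both treatments the one genuinely nontrivial closed-form evaluation is taken from the literature rather than proved, which you acknowledge and which matches the paper's level of rigor.
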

\begin{proof}
See~\cite{Adams78}.
\end{proof}

The coefficient in the series \eqref{equ:productofL1} can be written in a simpler form by employing the 3jm-symbol.
We need only the specific case with zero magnetic quantum numbers; see \cite{Edmonds57} for more information about the 3jm-symbols.

\begin{definition}[3jm-Symbol]
The 3jm-symbol with zero magnetic quantum numbers and $k,l,m\in\N_0$ is defined as
\label{equ:wignerdef}
\[
\wigner{k}{l}{m}^2 = \frac{(2g-2k)!(2g-2l)!(2g-2m)!}{(2g+1)!}\left[\frac{g!}{(g-k)!(g-l)!(g-m)!}\right]^2,
\]
if $2g$ is even and $|k-l|\leq m \leq k + l$; otherwise $\wignersmall{k}{l}{m}^2$ is zero. Here, $g$ is as in Theorem~\ref{thm:llp}.
\end{definition}

By noticing that 
\[
\ind{2g~\textrm{is even}} 
\ind{|k-l|\leq m \leq k+l}
\frac{A(g-k)A(g-l)A(g-m)}{A(g)}\frac{1}{2g+1}
= \wigner{k}{l}{m}^2,
\]
we can write \eqref{equ:productofL1} as 
\begin{align}
\label{equ:legendreprod}
L_k(y)L_l(y) = \sum_{m=|k-l|}^{k+l}L(k,l,m)L_m(y),
\end{align}
where
\[
L(k,l,m) = \sqrt{(2k+1)(2l+1)(2m+1)}\wigner{k}{l}{m}^2.
\]

Using the linearization formula~\eqref{equ:legendreprod}, we can compute the triple product integral of the Legendre polynomials.
\begin{theorem}[Triple product integral of the Legendre polynomials]
\label{thm:tintforLegendre}
The triple product integral of univariate Legendre polynomials $L_k(y), L_l(y),$ and $L_m(y)$ is given by
\begin{align*}
\int_{-1}^1 L_k(y)L_l(y)L_m(y)\rho(y)\dd y  
=L(k,l,m),
\end{align*}
where
\[
L(k,l,m) = \sqrt{(2k+1)(2l+1)(2m+1)}\wigner{k}{l}{m}^2.
\]
\end{theorem}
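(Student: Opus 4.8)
The plan is to reduce the triple product integral to a single product integral by first linearizing the product of two of the three factors and then invoking orthonormality. The key input is the linearization formula \eqref{equ:legendreprod}, which has already been established, together with the orthonormality relation $\int_{-1}^1 L_n(y) L_m(y) \rho(y)\dd y = \delta_{nm}$ for the normalized Legendre polynomials (here $\rho(y)=1/2$).

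First I would substitute \eqref{equ:legendreprod} for the product $L_k(y)L_l(y)$ directly into the integrand, writing
\[
\int_{-1}^1 L_k(y)L_l(y)L_m(y)\rho(y)\dd y
= \int_{-1}^1 \left(\sum_{n=|k-l|}^{k+l} L(k,l,n)\, L_n(y)\right) L_m(y)\,\rho(y)\dd y.
\]
Since the inner sum is finite, I would interchange the (finite) summation with the integral and pull the coefficients $L(k,l,n)$ out, obtaining $\sum_{n=|k-l|}^{k+l} L(k,l,n) \int_{-1}^1 L_n(y) L_m(y)\,\rho(y)\dd y$. Applying orthonormality then collapses each integral to $\delta_{nm}$, so that only the term $n=m$ survives and the whole expression reduces to $L(k,l,m)$, provided $m$ lies in the summation range $|k-l|\leq m \leq k+l$.

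The one point requiring a small amount of care — and the closest thing to an obstacle — is the boundary case where $m$ falls outside the interval $[\,|k-l|,\,k+l\,]$. In that situation the sum over $n$ contains no term with $n=m$, so the integral vanishes; I would then observe that $L(k,l,m)$ likewise vanishes, since the factor $\wignersmall{k}{l}{m}^2$ is defined to be zero precisely when the triangle condition $|k-l|\leq m \leq k+l$ (or the parity condition that $2g$ be even) fails, as recorded in Definition~\ref{equ:wignerdef}. Hence both sides agree identically, and the claimed identity holds for all $k,l,m\in\N_0$. Apart from this consistency check of the selection rules, the argument is a direct two-line computation, so I do not anticipate any genuine difficulty.
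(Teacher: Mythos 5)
Your proposal is correct and follows essentially the same route as the paper's own proof: linearize $L_k L_l$ via \eqref{equ:legendreprod}, apply orthonormality to collapse the sum to the $n=m$ term, and invoke the selection rule of $L(k,l,m)$ to cover the case where $m$ lies outside the summation range. Your explicit treatment of that boundary case is a welcome bit of extra care, but the argument is the same.
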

\begin{proof}
\begin{align*}
\int_{-1}^1 L_k(y)L_l(y)L_m(y)\rho(y)\dd y  
=\sum_{n=|k-l|}^{k+l}L(k,l,n)\delta_{nm}
=L(k,l,m),
\end{align*}
where we have first used \eqref{equ:legendreprod} and the orthonormality of the Legendre polynomials and then the selection rule $|k-l|\leq m \leq k + l$ of $L(k,l,m)$.
\end{proof}
Finally, we give the inversion formula for the univariate Legendre polynomials.
\begin{theorem}[Inversion formula for the Legendre polynomials]
\label{thm:inversioforLegendre}
The solution to the inversion problem for the univariate Legendre polynomials is given by
\[
y^k = \sum_{n=0}^k l^k_n L_n(y),
\]
where
\[
l^k_n = 
\ind{k-n~\textrm{is even}}
{k \choose n} 
n!\,
\sqrt{2n+1}
\frac{(k-n-1)!!}{(k+n+1)!!},
\quad 0 \leq n \leq k,
\]
and $!!$ denotes the double factorial.
\end{theorem}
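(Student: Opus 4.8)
The plan is to use that the orthonormal Legendre polynomials $\{L_n\}_{n\geq 0}$ form an orthonormal basis of $L^2([-1,1],\rho_0\,\dd y)$ with weight $\rho_0\equiv 1/2$. Expanding the monomial $y^k$ in this basis, $y^k=\sum_{n\geq 0} l^k_n L_n(y)$, the coefficients are recovered by projection, $l^k_n=\int_{-1}^1 y^k L_n(y)\,\rho_0(y)\,\dd y$. Since $y^k$ has degree $k$ and $L_n$ is orthogonal to every polynomial of degree strictly less than $n$, the coefficient vanishes for $n>k$; this already gives the finite upper limit $n\leq k$ in the claimed sum.

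To evaluate the projection integral, I would substitute the Rodrigues' formula for $L_n$ and integrate by parts $n$ times. The boundary terms vanish because $\pm 1$ is a zero of multiplicity $n$ of $(y^2-1)^n$, so this polynomial together with its first $n-1$ derivatives vanish at the endpoints. Using $\frac{\dd^n}{\dd y^n}y^k=\frac{k!}{(k-n)!}y^{k-n}$ and $(-1)^n(y^2-1)^n=(1-y^2)^n$, the integral collapses to
\[
l^k_n=\frac{\sqrt{2n+1}}{2^{n+1}\,n!}\,\frac{k!}{(k-n)!}\int_{-1}^1 y^{k-n}(1-y^2)^n\,\dd y.
\]

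The parity selection rule $\ind{k-n\textrm{ is even}}$ then falls out immediately: the integrand $y^{k-n}(1-y^2)^n$ is odd exactly when $k-n$ is odd, so the integral vanishes unless $k-n$ is even. Writing $k-n=2j$ and substituting $u=y^2$ turns the surviving integral into the Beta integral $B(j+\tfrac12,n+1)=\Gamma(j+\tfrac12)\Gamma(n+1)/\Gamma(j+n+\tfrac32)$; the factor $\Gamma(n+1)=n!$ then cancels the $n!$ introduced by the Rodrigues normalization.

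The main obstacle is the purely cosmetic last step of matching the ratio of Gamma functions to the double-factorial quotient in the statement. For this I would invoke the half-integer identity $\Gamma(p+\tfrac12)=\sqrt{\pi}\,(2p-1)!!/2^p$ at $p=j$ and $p=j+n+1$; after cancelling the powers of two and the $\sqrt{\pi}$, the ratio $\Gamma(j+\tfrac12)/\Gamma(j+n+\tfrac32)$ becomes $2^{n+1}(2j-1)!!/(2j+2n+1)!!$, and substituting back $2j=k-n$ gives $2^{n+1}(k-n-1)!!/(k+n+1)!!$. Assembling the remaining prefactors and rewriting $k!/(k-n)!={k\choose n}n!$ yields precisely $l^k_n=\ind{k-n\textrm{ is even}}{k\choose n}n!\sqrt{2n+1}\,(k-n-1)!!/(k+n+1)!!$. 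The only points needing the paper's conventions are the edge case $k=n$, where $(-1)!!=1$, and the vanishing of the boundary terms in the repeated integration by parts. An alternative route would be induction on $k$ via the three-term recurrence $yL_n=\frac{n+1}{\sqrt{(2n+1)(2n+3)}}L_{n+1}+\frac{n}{\sqrt{(2n-1)(2n+1)}}L_{n-1}$ encoded in the matrix $K^1$, but verifying the resulting recursion on the double factorials is algebraically heavier than the direct projection.
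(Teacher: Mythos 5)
Your argument is correct and complete, and I verified the computation: the projection $l^k_n=\int_{-1}^1 y^k L_n(y)\rho_0(y)\,\dd y$, $n$-fold integration by parts against the Rodrigues' formula (with boundary terms killed by the order-$n$ zero of $(y^2-1)^n$ at $\pm 1$), the parity selection rule, the Beta integral $B(j+\tfrac12,n+1)$ with $k-n=2j$, and the half-integer Gamma identity assemble exactly to $\binom{k}{n}\,n!\,\sqrt{2n+1}\,\frac{(k-n-1)!!}{(k+n+1)!!}$ with the indicator $\mathbf{1}_{\{k-n\ \mathrm{even}\}}$; spot checks at $k=1,2$ confirm the constants. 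The paper, however, does not prove this theorem at all: its ``proof'' is a one-line citation to the techniques of Artés et al., which in that reference means the general machinery of connection and linearization coefficients for hypergeometric orthogonal polynomial families. Your route is therefore genuinely different in character: it is an elementary, self-contained derivation specific to the Legendre case, relying only on Rodrigues' formula and standard calculus, at the cost of not generalizing for free to the Hermite case (Theorem A.9), which the cited framework covers uniformly. The only conventions you need to make explicit, as you correctly flag, are $(-1)!!=1$ for the edge case $n=k$ and the vanishing of the boundary terms; your alternative suggestion of inducting on $k$ via the three-term recurrence would indeed work but, as you say, offers no advantage here.
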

\begin{proof}
The result can be obtained, e.g., by using the techniques in~\cite{Artes98}.
\end{proof}

\subsection{Hermite polynomials}
We start by defining the Hermite polynomials and then give the linearization, triple product integral, and inversion formulas for them.
\begin{definition}[Hermite polynomials]
Let $m\in\N_0$. The $m$th orthonormal (probabilists') univariate Hermite polynomial is given by 
\[
H_m(y) = \frac{(-1)^m}{\sqrt{m!}}\exp(y^2/2)\frac{\dd^m}{\dd y^m}\exp(-y^2/2),\qquad m=0,1,\ldots~,
\]
with the orthonormality relation
\[
\int_{\R} H_k(y) H_l(y) \frac{\exp(-y^2/2)}{\sqrt{2\pi}} \dd y  = \delta_{kl}.
\]
\end{definition}

\begin{theorem}[Linearization formula for the Hermite polynomials]
\label{thm:lhp}
The series for the product of two univariate Hermite polynomials $H_k(y)$ and $H_l(y)$ is given by
\begin{align*}
H_{k}(y)H_{l}(y) = 
\sum_{m=|k-l|}^{k+l}
H(k,l,m)H_m(y),
\end{align*}
where 
\[
H(k,l,m) = \ind{2g~\textrm{is even}}\ind{|k-l|\leq m \leq k+l}\left[{k \choose {g-m} } {l \choose {g-m} } {m \choose {g-k} }\right]^\frac{1}{2}
\]
with $g = (k+l+m)/2$.
\end{theorem}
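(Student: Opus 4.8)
The plan is to reduce the claim to the classical linearization identity for the \emph{unnormalized} probabilists' Hermite polynomials $\mathrm{He}_m$ (normalized so that $H_m = \mathrm{He}_m/\sqrt{m!}$) and then translate it into the orthonormal convention used here. Concretely, I would first establish
\begin{align*}
\mathrm{He}_k(y)\,\mathrm{He}_l(y) = \sum_{j=0}^{\min(k,l)} j!\binom{k}{j}\binom{l}{j}\,\mathrm{He}_{k+l-2j}(y),
\end{align*}
a standard fact that can either be cited (cf.\ \cite{Artes98}) or derived directly from generating functions.

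For the self-contained route I would use the generating function $\sum_{m\geq 0}\mathrm{He}_m(y)\,t^m/m! = \exp(ty - t^2/2)$. Multiplying the generating functions in two variables $s,t$ produces $\exp((s+t)y - (s^2+t^2)/2)$, whose coefficient of $s^k t^l$ is $\mathrm{He}_k(y)\mathrm{He}_l(y)/(k!\,l!)$. The key algebraic step is the factorization
\begin{align*}
\exp\!\left((s+t)y - \tfrac{s^2+t^2}{2}\right) = \exp\!\left((s+t)y - \tfrac{(s+t)^2}{2}\right)\exp(st),
\end{align*}
which holds because $(s+t)^2/2 = (s^2+t^2)/2 + st$. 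Expanding the right-hand side as $\big(\sum_m \mathrm{He}_m(y)(s+t)^m/m!\big)\big(\sum_j (st)^j/j!\big)$, applying the binomial theorem to $(s+t)^m$, and matching the coefficient of $s^k t^l$ yields the linearization identity after setting $j = (k+l-m)/2$. The admissible range of $j$ forces $m$ to run over $|k-l|, |k-l|+2,\ldots,k+l$, which is precisely the summation range together with the parity constraint that $k+l+m = 2g$ be even; these give the selection factors $\ind{2g~\textrm{is even}}$ and $\ind{|k-l|\leq m\leq k+l}$.

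Finally I would divide by $\sqrt{k!\,l!}$ and multiply the $m$th term by $\sqrt{m!}$ to pass from $\mathrm{He}$ to $H$. Writing $j = g-m$, the linearization coefficient becomes
\begin{align*}
H(k,l,m) = \frac{\sqrt{m!}}{\sqrt{k!\,l!}}\,(g-m)!\,\binom{k}{g-m}\binom{l}{g-m}.
\end{align*}
The remaining task is to check that this agrees with the symmetric form $\big[\binom{k}{g-m}\binom{l}{g-m}\binom{m}{g-k}\big]^{1/2}$ stated in the theorem. This is a routine factorial rearrangement: using $k-(g-m)=g-l$, $l-(g-m)=g-k$, and $m-(g-k)=g-l$, both expressions collapse to $\sqrt{k!\,l!\,m!}\big/\big((g-k)!\,(g-l)!\,(g-m)!\big)$. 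I expect this verification of the coefficient's symmetric form, rather than the linearization step itself, to be the only delicate bookkeeping; once the factorials are tracked carefully the two presentations coincide exactly.
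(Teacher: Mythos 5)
Your proof is correct, but it takes a genuinely different route from the paper: the paper's entire proof of Theorem~\ref{thm:lhp} is a citation to \cite{Artes98}, whereas you supply a self-contained derivation. Your generating-function argument is sound: the factorization $\exp\bigl((s+t)y-\tfrac{s^2+t^2}{2}\bigr)=\exp\bigl((s+t)y-\tfrac{(s+t)^2}{2}\bigr)\exp(st)$ is the right key step, extracting the coefficient of $s^kt^l$ does give $\mathrm{He}_k\mathrm{He}_l=\sum_j j!\binom{k}{j}\binom{l}{j}\mathrm{He}_{k+l-2j}$, and the range $0\leq j\leq\min(k,l)$ reproduces exactly the summation limits and the parity indicator $\ind{2g~\textrm{is even}}$ of the statement. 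The final bookkeeping also checks out: with $j=g-m$ one has $k-j=g-l$, $l-j=g-k$, $m-(g-k)=g-l$, so both your renormalized coefficient $\tfrac{\sqrt{m!}}{\sqrt{k!\,l!}}(g-m)!\binom{k}{g-m}\binom{l}{g-m}$ and the theorem's symmetric form $\bigl[\binom{k}{g-m}\binom{l}{g-m}\binom{m}{g-k}\bigr]^{1/2}$ reduce to $\sqrt{k!\,l!\,m!}\big/\bigl((g-k)!\,(g-l)!\,(g-m)!\bigr)$. What your approach buys is an elementary, verifiable proof in place of an external reference, and it makes transparent where the normalization factors $\sqrt{k!}$, $\sqrt{l!}$, $\sqrt{m!}$ enter; what the paper's citation buys is brevity and placement of the result within the general linearization framework for hypergeometric polynomial families developed in \cite{Artes98}.
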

\begin{proof}
See~\cite{Artes98}.
\end{proof}

By using Theorem~\ref{thm:lhp}, we obtain the triple product integral of the Hermite polynomials in a similar way as in the Legendre case.
\begin{theorem}[Triple product integral of the Hermite polynomials]
\label{thm:tintforHermite}
The triple product integral of univariate Hermite polynomials $H_k(y), H_l(y),$ and $H_m(y)$ is given by
\begin{align*}
\int_\R H_k(y) H_l(y) H_m(y) \rho(y) \dd y 
=H(k,l,m).
\end{align*}
\end{theorem}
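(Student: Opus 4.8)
The plan is to mirror the proof of Theorem~\ref{thm:tintforLegendre} almost verbatim, exploiting the fact that the linearization formula for the Hermite polynomials (Theorem~\ref{thm:lhp}) has exactly the same structure as the Legendre linearization, only with the coefficient $L(k,l,\cdot)$ replaced by $H(k,l,\cdot)$. First I would apply Theorem~\ref{thm:lhp} to rewrite the product $H_k(y)H_l(y)$ appearing in the integrand as the finite sum $\sum_{n=|k-l|}^{k+l} H(k,l,n) H_n(y)$. Substituting this into the integral and interchanging the (finite) sum with the integral, the problem reduces to evaluating $\int_\R H_n(y) H_m(y) \rho(y) \dd y$ for each $n$ in the summation range.

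Next I would invoke the orthonormality relation from the definition of the Hermite polynomials, namely $\int_\R H_n(y) H_m(y) \rho(y) \dd y = \delta_{nm}$, so that every term in the sum vanishes except possibly the one with $n=m$. This collapses the expression to $\sum_{n=|k-l|}^{k+l} H(k,l,n)\,\delta_{nm}$, which equals $H(k,l,m)$ whenever $m$ lies in the index range $|k-l|\leq m\leq k+l$.

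The one point requiring a moment's care---and the only place the argument is not purely mechanical---is the case $m\notin[|k-l|,k+l]$, where the collapsed sum produces $0$ but the claimed answer $H(k,l,m)$ must also vanish for the stated identity to hold for \emph{all} nonnegative integers $m$. Here I would appeal to the explicit selection-rule indicator $\ind{|k-l|\leq m \leq k+l}$ built into the definition of $H(k,l,m)$ in Theorem~\ref{thm:lhp}, which forces $H(k,l,m)=0$ precisely outside that range; together with the companion factor $\ind{2g~\textrm{is even}}$, this guarantees that both sides agree identically. Consequently the argument runs exactly parallel to the Legendre case, and no new computation beyond quoting Theorem~\ref{thm:lhp} and the orthonormality relation is needed---which is why the author's proof can legitimately be stated as ``similar to the Legendre case.''
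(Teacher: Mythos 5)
Your argument is exactly the paper's intended proof: the paper proves the Legendre analogue (Theorem~\ref{thm:tintforLegendre}) by combining the linearization formula with orthonormality and the selection rule, and explicitly states that the Hermite case follows ``in a similar way,'' which is precisely the route you take. Your extra remark that the indicators $\ind{2g~\textrm{is even}}$ and $\ind{|k-l|\leq m \leq k+l}$ in the definition of $H(k,l,m)$ make the identity hold for all $m$ is correct and fills in the one detail the paper leaves implicit.
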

We conclude with the solution to the inversion problem for the Hermite polynomials.
\begin{theorem}[Inversion formula for the Hermite polynomials]
\label{thm:inversioforHermite}
\[
y^k = \sum_{n=0}^k h^k_n H_n(y),
\]
where
\[
h^k_n = 
\ind{k-n~\textrm{is even}}
{k \choose n} 
\sqrt{n!}\,
(k-n-1)!!,
\quad 0 \leq n \leq k.
\]
\end{theorem}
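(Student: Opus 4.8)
The plan is to bypass the hypergeometric machinery of \cite{Artes98} and instead compute the coefficients directly as orthogonal projections. Since the orthonormal Hermite polynomials $\{H_n\}_{n\geq 0}$ form a complete system in $L^2(\R,\rho)$ with $\rho(y)=\exp(-y^2/2)/\sqrt{2\pi}$, and $y^k$ is a polynomial of degree $k$, it has a finite expansion $y^k=\sum_{n=0}^{k} h^k_n H_n(y)$ whose coefficients are
\[
h^k_n = \int_\R y^k H_n(y)\rho(y)\,\dd y.
\]
The degree of $y^k$ immediately forces $h^k_n=0$ for $n>k$, which explains the truncation at $n=k$; the remaining task is to evaluate these integrals in closed form and simplify.

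First I would substitute the Rodrigues formula for $H_n$ and use the cancellation $\exp(y^2/2)\exp(-y^2/2)=1$ to collapse the weighted polynomial into a pure Gaussian derivative,
\[
H_n(y)\rho(y) = \frac{(-1)^n}{\sqrt{2\pi}\,\sqrt{n!}}\,\frac{\dd^n}{\dd y^n}\exp(-y^2/2).
\]
Inserting this into the integral and integrating by parts $n$ times (the boundary terms vanish by Gaussian decay at $\pm\infty$) transfers all $n$ derivatives onto $y^k$ and produces a sign $(-1)^n$ that cancels the prefactor. Using $\frac{\dd^n}{\dd y^n}y^k = \frac{k!}{(k-n)!}\,y^{k-n}$ for $n\leq k$, the whole computation reduces to the Gaussian moment $\frac{1}{\sqrt{2\pi}}\int_\R y^{k-n}\exp(-y^2/2)\,\dd y$.

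The final step combines the standard Gaussian moment identity, which equals $(k-n-1)!!$ when $k-n$ is even and vanishes otherwise, with the algebraic factor $\frac{k!}{(k-n)!\,\sqrt{n!}}$; rewriting $\frac{k!}{(k-n)!}={k \choose n}\,n!$ and pairing $n!/\sqrt{n!}=\sqrt{n!}$ yields exactly $h^k_n=\ind{k-n~\textrm{is even}}{k \choose n}\sqrt{n!}\,(k-n-1)!!$. The main obstacle I anticipate is not any individual step but the bookkeeping: correctly matching the parity indicator and the double-factorial normalization to the moment formula, with care for the convention $(-1)!!=1$ at $n=k$ and for the two cancelling signs $(-1)^n$, while confirming that odd $k-n$ genuinely gives a vanishing moment. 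As a cross-check (and a fallback if the integration-by-parts bookkeeping proves delicate), I would multiply the generating identity $\exp(ty-t^2/2)=\sum_{n\geq 0}H_n(y)\,t^n/\sqrt{n!}$ by $\exp(t^2/2)$ and compare the coefficient of $t^k$ in $\exp(ty)=\sum_k y^k t^k/k!$ against $\exp(t^2/2)\sum_n H_n(y)\,t^n/\sqrt{n!}$; this reproduces the same $h^k_n$ after converting the factor $1/(2^{(k-n)/2}((k-n)/2)!)$ into $(k-n-1)!!/(k-n)!$, so I expect the direct projection argument to be the cleaner of the two.
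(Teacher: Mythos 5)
Your proof is correct. The coefficient computation checks out: projecting $y^k$ onto the orthonormal basis gives $h^k_n=\int_\R y^k H_n(y)\rho(y)\,\dd y$, the Rodrigues form collapses $H_n\rho$ to $\frac{(-1)^n}{\sqrt{2\pi}\sqrt{n!}}\frac{\dd^n}{\dd y^n}e^{-y^2/2}$, $n$-fold integration by parts (boundary terms killed by Gaussian decay) yields $\frac{k!}{(k-n)!\sqrt{n!}}$ times the Gaussian moment of order $k-n$, and the identities $\frac{k!}{(k-n)!}={k\choose n}n!$ and $n!/\sqrt{n!}=\sqrt{n!}$ give exactly the stated $h^k_n$, including the parity indicator and the $(-1)!!=1$ convention at $n=k$. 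However, your route is genuinely different from the paper's: the paper does not prove the formula at all but simply cites the general connection-coefficient machinery of Art\'es et al., which treats inversion and linearization problems uniformly for hypergeometric orthogonal polynomial families (and is what the paper also invokes for the Legendre analogues and the linearization formulas). Your argument buys a short, self-contained, elementary derivation that exploits the special structure of the Hermite case --- namely that the Rodrigues factor cancels exactly against the weight, reducing everything to Gaussian moments; the price is that it does not transfer to, say, the Legendre inversion formula (Theorem~A.4 in the appendix), where no such cancellation occurs and the paper must still fall back on the cited general techniques. Your generating-function cross-check is also sound and reproduces the same coefficients after the conversion $2^{(k-n)/2}((k-n)/2)!=(k-n)!!$.
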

\begin{proof}
See~\cite{Artes98}.
\end{proof}
\clearpage
\section{ \texorpdfstring{Pseudocode for generating the $\mathtt{TS}$ index set}{Pseudocode for generating the TS index set} }
\label{App:A}
\setcounter{algocf}{2}
\begin{algorithm}[H]
  \caption{Pseudocode for generating the index set $\TS(\mu,\varepsilon)$ and the corresponding parenthood relations $\PtC$.}
  \label{alg:multi1}
 \SetLine 
 \KwData{Vector $\mu \in c_0$ and real number $\varepsilon\in (0,1)$.}
 \KwResult{Multi-indices $\TS(\mu,\varepsilon)$ and parenthood relations $\PtC$.}
 initialize $\alpha$ to empty vector of integer pairs\;
 push pair $(\alpha,1)$ to the end of $\TS(\mu,\varepsilon)$\;
 initialize $\varepsilon_\alpha$, $m$, $p$, and $c$ to one\;
 initialize $n$ to two\;
 create empty stacks for $\alpha$, $m$, $\varepsilon_{\alpha}$, and $p$\;
 \While{True}{
   \If{$\varepsilon_{\alpha}\mu_m\geq\varepsilon$}{
     push $\alpha$, $m$, and $\varepsilon_{\alpha}$ to the corresponding stack\;
     push $c$ to $p$-stack\; 
     increase $m$ by one\;
     continue\;
   }
     \lIf{$m$-stack is empty}{return $\TS(\mu,\varepsilon)$ and $\PtC$}
     set $\alpha$, $m$, $\varepsilon_{\alpha}$, and $p$ to the top value in the corresponding stack\;
     pop $\alpha$, $m$, $\varepsilon_{\alpha}$, and $p$-stacks\;
     
     \lIf{\# of pairs in $\alpha = 0$ OR $\alpha[-1][1]\neq m$}{
       push integer pair $(m,0)$ to the end of $\alpha$
     }

     increase $\alpha[-1][2]$ by one\;
     multiply $\varepsilon_{\alpha}$ with $\mu_m$\;
   set $c$ to $n$\;
   increase $n$ by one\;
   push pair $(\alpha, \varepsilon_{\alpha})$ to the end of $\TS(\mu,\varepsilon)$\;
   \lWhile{\# of elements in $\PtC < p$}{push an empty integer vector to the end of $\PtC$}
   push $c$ to the front of $\PtC[p]$\;
 }
\end{algorithm}

\section{Example run of Algorithm~\ref{alg:multi1}}
\label{App:examplerun}
An example run of Algorithm~\ref{alg:multi1} with intermediate steps in the spirit of Algorithm~\ref{alg:generalTS} is shown in Figure~\ref{fig:TSexample1a} and in Table~\ref{tbl:TSexample1b} for 
\[
\mu=(2^{-1}, 3^{-1}, 4^{-1}, 50^{-1}, 60^{-1}, 70^{-1}, \ldots)\,\textrm{ and }\,\varepsilon = 20^{-1}.
\]

Figure~\ref{fig:TSexample1a} shows both the resulting index set $\TS(\mu,\varepsilon)$ and the corresponding $\PtC$ information, and Table~\ref{tbl:TSexample1b} contains the intermediate steps, i.e., for each pass through the main loop in the algorithm, the most recent addition to the index set, the stack just before the next addition to the index set, and the considered child which was not added to the stack.

\begin{landscape}
\vspace*{\fill}
\begin{figure}[htb!]
        \includegraphics{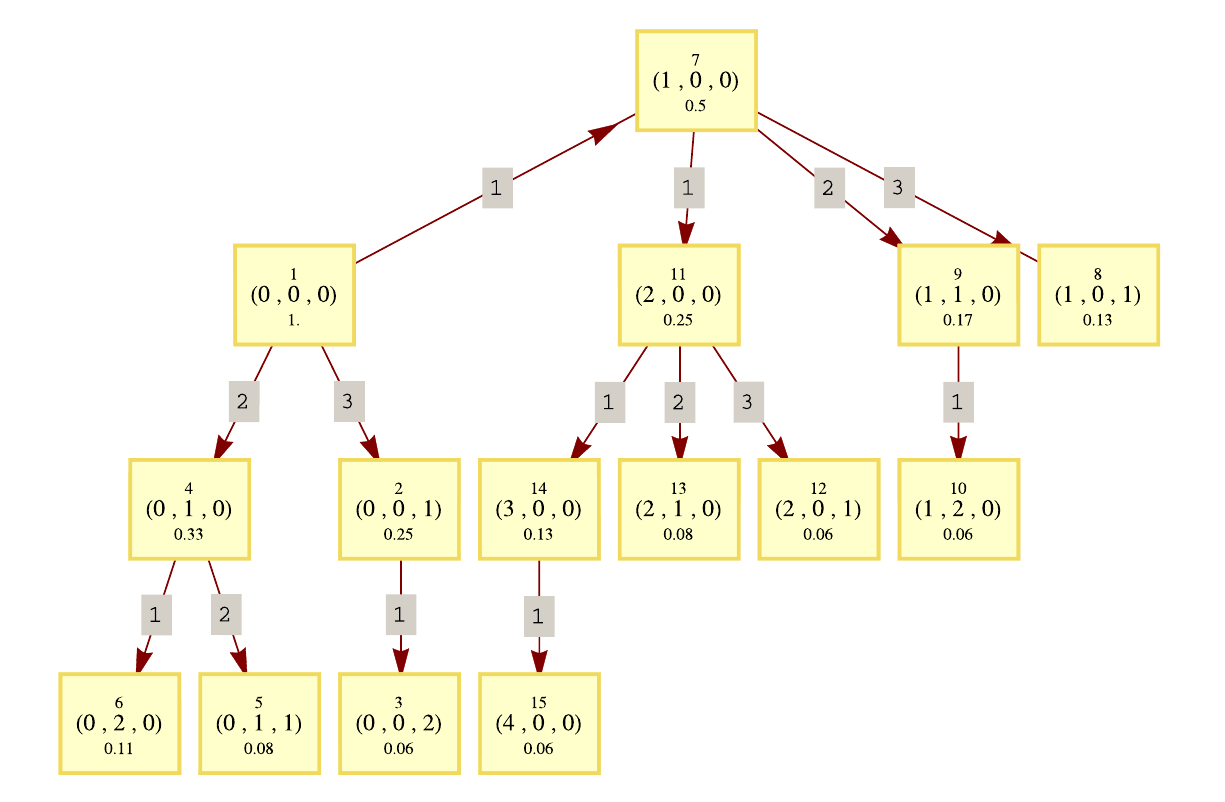}
	\caption{Index set $\TS(\mu,\varepsilon)$ and the corresponding $\PtC$ information for
          $\mu=(2^{-1}, 3^{-1}, 4^{-1}, 50^{-1}, 60^{-1}, 70^{-1}, \ldots)$
          and $\varepsilon = 20^{-1}$.
          The vertices correspond to the multi-indices in $\TS(\mu,\varepsilon)$.
          The vertex labels show the generation order (top),
          the first three indices (the rest are zeros) (middle),
          and the (rounded) value $\mu^\alpha$  (bottom) of the multi-indices.
          The directed edges correspond to the parenthood relations between the multi-indices, i.e., they display the information stored in $\PtC$.
          The edge labels indicate the order of the children of, say, $\alpha$ in the vector $\PtC[\alpha]$.
}
	\label{fig:TSexample1a}
\end{figure}
\vspace*{\fill}
\end{landscape}

\begin{landscape}
\vspace*{\fill}
\begin{table}[htb!]
\Large
\rowcolors{1}{}{lightgray}
\begin{tabular}[]{cclc|cclc}
\#&Addition&Stack&Considered&\#&Addition&Stack&Considered\\
\hline
1&
$\mind{(0,0,0)}{1}$&
$\scriptstyle{(1,0,0),\,(0,1,0),\,(0,0,1)}$&
$\mind{(0,0,0,1)}{0.02}$&
9&
$\mind{(1,1,0)}{0.17}$&
$\scriptstyle{(2,0,0),\,(1,2,0)}$&
$\mind{(1,1,1,0)}{0.04}$
\\
2&
$\mind{(0,0,1)}{0.25}$&
$\scriptstyle{(1,0,0),\,(0,1,0),\,(0,0,2)}$&
$\mind{(0,0,1,1)}{0.01}$&
10&
$\mind{(1,2,0)}{0.06}$&
$\scriptstyle{(2,0,0)}$&
$\mind{(1,3,0,0)}{0.02}$
\\
3&
$\mind{(0,0,2)}{0.06}$&
$\scriptstyle{(1,0,0),\,(0,1,0)}$&
$\mind{(0,0,3,0)}{0.02}$&
11&
$\mind{(2,0,0)}{0.25}$&
$\scriptstyle{(3,0,0),\,(2,1,0),\,(2,0,1)}$&
$\mind{(2,0,0,1)}{0.01}$
\\
4&
$\mind{(0,1,0)}{0.33}$&
$\scriptstyle{(1,0,0),\,(0,2,0),\,(0,1,1)}$&
$\mind{(0,1,0,1)}{0.01}$&
12&
$\mind{(2,0,1)}{0.06}$&
$\scriptstyle{(3,0,0),\,(2,1,0)}$&
$\mind{(2,0,2,0)}{0.02}$
\\
5&
$\mind{(0,1,1)}{0.08}$&
$\scriptstyle{(1,0,0),\,(0,2,0)}$&
$\mind{(0,1,2,0)}{0.02}$&
13&
$\mind{(2,1,0)}{0.08}$&
$\scriptstyle{(3,0,0)}$&
$\mind{(2,2,0,0)}{0.03}$
\\
6&
$\mind{(0,2,0)}{0.11}$&
$\scriptstyle{(1,0,0)}$&
$\mind{(0,3,0,0)}{0.04}$&
14&
$\mind{(3,0,0)}{0.13}$&
$\scriptstyle{(4,0,0)}$&
$\mind{(3,1,0,0)}{0.04}$
\\
7&
$\mind{(1,0,0)}{0.5}$&
$\scriptstyle{(2,0,0),\,(1,1,0),\,(1,0,1)}$&
$\mind{(1,0,0,1)}{0.01}$&
15&
$\mind{(4,0,0)}{0.06}$&
&
$\mind{(5,0,0,0)}{0.03}$
\\
8&
$\mind{(1,0,1)}{0.13}$&
$\scriptstyle{(2,0,0),\,(1,1,0)}$&
$\mind{(1,0,2,0)}{0.03}$\\
\end{tabular}
\caption{
Intermediate steps in Algorithm~\ref{alg:multi1} when constructing the index set $\TS(\mu,\varepsilon)$ for 
$\mu=(2^{-1}, 3^{-1}, 4^{-1}, 50^{-1}, 60^{-1}, 70^{-1}, \ldots)$
and $\varepsilon = 20^{-1}$.
From left to right, the columns consists of
the number of times we have passed through the start of the main loop,
the first three indices (the rest are zeros) of the most recently added multi-index to $\TS(\mu,\varepsilon)$ with the corresponding (rounded) value $\mu^\alpha$ shown below,
multi-indices in the stack just before the next addition to the index set, 
and the considered child which was not added to the stack.
\label{tbl:TSexample1b}
}
\end{table}
\vspace*{\fill}
\end{landscape}

\section{Pseudocode for generating the neighbour matrices}
\label{App:B}
\begin{algorithm}[H]
  \caption{Pseudocode for generating the neighbour matrices $\{N^w\}_{w \in W_\Xi}$ corresponding to the index set $\TS(\mu,\varepsilon)$.}
  \label{alg:Gmatrices}
  \SetLine  
  \KwData{Weights $W_\Xi$, index set $\TS(\mu,\varepsilon)$, and the corresponding vector $\mu$, real number $\varepsilon$, and parenthood relations $\PtC$.}
  \KwResult{Neighbour matrices $\{N^w\}_{w \in W_\Xi}$.}
  \For{$m = 1$ to \# of elements in $W_\Xi$}{
    set $w$ to $W_\Xi[m]$\;
    set $\varepsilon_w$ to $\mu^w$\;
    \For{$n = 1$ to \# of elements in $\TS(\mu,\varepsilon)$}{
      set $\eta$ to $\TS(\mu,\varepsilon)[n][1]$\;
      set $\varepsilon_{\eta}$ to $\TS(\mu,\varepsilon)[n][2]$\;
      set $\gamma$ to $\eta - w$\;
      \lIf{$\gamma[r][2] < 0$ for some $r$ OR $\varepsilon_{\eta} / \varepsilon_{w} < \varepsilon $}{continue}
      initialize $k$ and $u$ to one\;
      \For{each pair $p$ in $\gamma$ in the increasing position order}{
        \lIf{$p[2] = 0$}{continue}
        initialize $j$ to $p[2]$\;	  
        \If{$p[1] > u$}{
          decrease $j$ by one\;
          set $k$ to $\PtC[k][p[1]-u+1]$\;
          set $u$ to $p[1]$\;
        }	  
        \While{$j > 0$}{
          set $k$ to $\PtC[k][1]$\;
          decrease $j$ by one\;
        }
      }
      set $\big[N^w\big]_{n, k}$ and $\big[N^w\big]_{k, n}$ to one\;
    }
  }
  return $\{N^w\}_{w \in W_\Xi}$\;
\end{algorithm}

\section{ \texorpdfstring{Modifications for the $\mathtt{isoTD}$ index set}{Modifications for the isoTD index set} }
\label{App:C}
\begin{algorithm}[H]
\nocaptionofalgo
\caption{Modifications to Algorithm~\ref{alg:multi1} for the $\isoTD$ index set.}
\KwData{Positive integer $N$ and nonnegative integer $K$.}
\SetLine 
\setcounter{AlgoLine}{1}
push pair $(\alpha,0)$ to the end of $\TD(N,K)$\;
initialize $\varepsilon_\alpha$ to zero; initialize $m$, $p$, and $c$ to one\;
\setcounter{AlgoLine}{6}
\lIf{$\varepsilon_{\alpha} < K$ AND $m \leq N$}{\ldots}
\setcounter{AlgoLine}{17}
increase $\varepsilon_{\alpha}$ by one\;
\end{algorithm}

\begin{algorithm}[H]
\nocaptionofalgo
\caption{Modifications to Algorithm~\ref{alg:Gmatrices} for the $\isoTD$ index set.}
\KwData{Weights $W_\Xi$, index set $\isoTD(N,K)$, and the corresponding $N$, $K$, and $\PtC$.}
\SetLine 
\setcounter{AlgoLine}{2}
set $\varepsilon_w$ to $\sum_{r\geq1}w_r$\;
\setcounter{AlgoLine}{7}
\lIf{$\length{w} > N$ OR $\gamma[r][2] < 0$ for some $r$ OR $\varepsilon_{\eta} - \varepsilon_{w} > K $}{continue}
\end{algorithm}

\section{ \texorpdfstring{Modifications for the $\mathtt{aTP}$ index set}{Modifications for the aTP index set} }
\label{App:D}

\begin{algorithm}[H]
\nocaptionofalgo
\caption{Modifications to Algorithm~\ref{alg:multi1} for the $\aTP$ index set.}
\KwData{Positive integer $N$, nonnegative integer $K$, and weight vector $\mu\in\R_+^N$.}
\SetLine 
\setcounter{AlgoLine}{1}
push pair $(\alpha,0)$ to the end of $\aTP(N,K,\mu)$\;
initialize $\varepsilon_\alpha$ to zero; initialize $m$, $p$, and $c$ to one\;
\setcounter{AlgoLine}{6}

\lIf{$\varepsilon_\alpha \leq K$ AND $m \leq N$}{\ldots}
\setcounter{AlgoLine}{9}
increase $m$ by one; set $\varepsilon_\alpha$ to zero\;
\setcounter{AlgoLine}{12}
\While{True}{
\setcounter{AlgoLine}{12}
\lIf{$m$-stack is empty}{return $\aTP(N,K,\mu)$ and $\PtC$}
\setcounter{AlgoLine}{17}
increase $\varepsilon_{\alpha}$ by $\frac{\mu_m}{\mu_{\min}}$\;
\setcounter{AlgoLine}{17}
\lIf{$\varepsilon_\alpha \leq  K $}{break}
\setcounter{AlgoLine}{17}
}
\end{algorithm}
Notice that the lines 13 and 18 in Algorithm~\ref{alg:multi1} are replaced with multiple lines.

\begin{algorithm}[H]
\nocaptionofalgo
\caption{Modifications to Algorithm~\ref{alg:Gmatrices} for the $\aTP$ index set}
\KwData{Weights $W_\Xi$, multi-indices $\aTP(N,K,\mu)$, and the corresponding $N$, $K$, $\mu$, and $\PtC$.}
\SetLine 
\setcounter{AlgoLine}{2}
{\em remove this line}\;
\setcounter{AlgoLine}{7}
\lIf{$\length{w} > N$ OR $\gamma[r][2] < 0$ OR $\frac{\mu_{\gamma[r][1]}}{\mu_{\min}} \gamma[r][2] > K$ for some $r$}{continue}
\setcounter{AlgoLine}{14}
set $k$ to $\PtC[k][-1-\aTP(N,K,\mu)[\PtC[k][-1]][1][-1][1]+p[1]]$\;
\end{algorithm}
Here, $\PtC[\alpha][-m]$ denotes the $m$th element of the vector $\PtC[\alpha]$ counting from the end.
In the $\aTP$ case it might happen that even though the multi-indices
$(\alpha_1,\ldots,\alpha_m,0,0,\ldots)$ and
$(\alpha_1,\ldots,\alpha_m,1,0,\ldots)$
belong to the $\aTP$ index set the multi-index
$(\alpha_1,\ldots,\alpha_m+1,0,0,\ldots)$ does not.
Due to this fact, we had to introduce a new while loop between lines 13 and 18 of Algorithm~\ref{alg:multi1} to ensure that only valid multi-indices are added to the index set.
Moreover, we had to process the position-value pairs in a different manner than in the other cases resulting in the difficult looking line 15 in Algorithm~\ref{alg:Gmatrices}. On that line, the one step down the tree to the child with the length $m$ is done from the end and not from the front.

If the dimension weights $\mu$ are given using the vector $g$, we need to replace 
$\frac{\mu_m}{\mu_{\min}}$ on line~18 of the $\aTP$ modified Algorithm~\ref{alg:multi1} with $g_m$ and 
$\frac{\mu_{\gamma[r][1]}}{\mu_{\min}}$ on line~8 of the $\aTP$ modified Algorithm~\ref{alg:Gmatrices} with $g_{\gamma[r][1]}$
and we consider $K$ as a nonnegative real number.

\bibliographystyle{acm}

\bibliography{Hakula_Leinonen_On_efficient_construction_of_stochastic_moment_matrices}

\end{document}